      \def\dC{{\mathbb C}}
      \def\dR{{\mathbb R}}
      \def\cC{{\mathcal C}}
   \def\cH{{\mathcal H}}   
      \def\cL{{\mathcal L}}
\def\cS{{\mathcal S}}
\def\cal H{{\mathcal H}}
\def\Re{{\text{\rm Re\,}}}
\def\Im{{\text{\rm Im\,}}}
\def\ran{{\text{\rm ran\,}}}
\def\dom{{\text{\rm dom\,}}}
\def\max{{\text{\rm max\,}}}
\def\min{{\text{\rm min\,}}}
\def\ess{{\text{\rm ess}}}
\def\phi{\varphi}
\newtheorem{theorem}{Theorem}[section]
\newtheorem*{thm*}{Theorem}
\newtheorem{corollary}[theorem]{Corollary}
\newtheorem{lemma}[theorem]{Lemma}
\theoremstyle{definition}
\newtheorem{remark}[theorem]{Remark}
\numberwithin{equation}{section}
\title[Elliptic differential operators with indefinite weights]{Spectral theory of elliptic differential operators with indefinite weights}
\author{Jussi Behrndt}
\address{Institut f\"ur Numerische Mathematik\\
Technische Universit\"at Graz \\
Steyrergasse 30\\
A-8010 Graz \\
Austria}
\email{behrndt@tugraz.at}
\begin{document}

\begin{abstract}
The spectral properties of a class of non-selfadjoint second order 
elliptic operators with indefinite weight functions 
on unbounded domains $\Omega$ are investigated.
It is shown that under an abstract regularity assumption the nonreal spectrum of
the associated elliptic operator in $L^2(\Omega)$ is bounded. In the special case
that $\Omega=\dR^n$ decomposes into subdomains $\Omega_+$ and $\Omega_-$
with smooth compact boundaries and the weight function is positive  
on $\Omega_+$ and negative on $\Omega_-$,
it turns out that the nonreal spectrum consists only of normal eigenvalues which can
be characterized with a Dirichlet-to-Neumann map. 
\end{abstract}

\maketitle

\section{Introduction}

The present paper is concerned with the spectral properties of 
partial differential operators associated to second order elliptic differential expressions of the form
\begin{equation}\label{tttt}
\cL f=\frac{1}{r}\,\ell (f),\qquad \ell(f)=-\sum_{j,k=1}^n \frac{\partial}{\partial x_j} a_{jk} \frac{\partial }{\partial x_k}f
+a f,
\end{equation}
with variable coefficients $a_{jk}$, $a$, and a weight function $r$ defined on some bounded or
unbounded domain $\Omega\subset\dR^n$, $n>1$.
It is assumed that the differential expression $\ell$ is formally symmetric and 
uniformly elliptic. The peculiarity in this paper is that 
the function $r$ is allowed to have different signs on subsets of positive Lebesgue
measure of $\Omega$.
For this reason $\cL$ is said to be an {\it indefinite} elliptic differential expression.

The differential expression $\ell$ in \eqref{tttt} gives rise to a selfadjoint unbounded
operator $A$ in the Hilbert space $L^2(\Omega)$ which is defined on the dense linear subspace
$\dom A=\{f\in H^1_0(\Omega):\ell(f)\in L^2(\Omega)\}$. The spectral properties of the elliptic differential operator $A$ depend on the geometry of $\Omega$ and the coefficients $a_{jk}$ and $a$, and are, at least 
from a qualitative point of view, well understood: The selfadjointness and ellipticity of $A$ imply 
that the spectrum of $\sigma(A)$ is contained in $\dR$ and that it is semibounded from below. 
If the domain $\Omega$ is bounded
or ``thin'' at $\infty$, then the resolvent of $A$ is compact and hence $\sigma(A)$ consists of a
sequence of eigenvalues with finite dimensional eigenspaces which accumulates
to $+\infty$; see, e.g., \cite{EE}. For general unbounded domains $\sigma(A)$ may also
contain continuous and essential spectrum of rather arbitrary form. However, if, e.g., the coefficients $a_{jk}$ and $a$ converge to a limit for $\vert x\vert\rightarrow\infty$,
then the essential spectrum of $A$ consists of a single unbounded interval.

In contrast to the selfadjoint case 
the spectral properties of the non-selfadjoint indefinite elliptic operator 
\begin{equation}\label{ttttt}
T=\frac{1}{r}A,\qquad \dom T=\dom A,
\end{equation} 
associated to the differential expression in \eqref{ttttt} are
much less understood, in particular, if the domain $\Omega$ is unbounded.
The case of a bounded domain $\Omega$ is discussed in, e.g., \cite{F90-1,F90}, 
where the point of view is similar to ours.
Further properties of indefinite elliptic operators on bounded domains, 
as, e.g., asymptotical behaviour of eigenvalues or Riesz basis properties of eigenfunctions have been studied 
(also for more general elliptic problems involving indefinite weights) in various papers. We mention here in particular the works
\cite{F88,F95,F00,F02,F09,FL96} of M. Faierman and \cite{P89,P95,P00,P09} of S.G. Pyatkov,
and, e.g., \cite{ADF97,DFM02,FM07}.

The main objective of the present paper is to study spectral properties of 
non-selfadjoint indefinite
elliptic operators of the form \eqref{ttttt} on unbounded domains.
Such problems are more difficult to investigate and a
purely abstract operator theoretic and functional analytic approach is  
insufficient in this situation (since, e.g., the essential
spectrum of $A$ is in general nonempty it is difficult to conclude that the spectrum of $T$ does
not cover the whole complex plane).
Therefore, in this paper we combine methods from the classical theory of elliptic differential equations 
with modern spectral and perturbation techniques for unbounded operators which are symmetric 
with respect to an indefinite inner product. Our investigations lead to new insights and 
results on the spectral properties of 
indefinite elliptic operators on unbounded domains, e.g., we
prove that under an abstract regularity assumption the nonreal spectrum of $T$ is bounded.
Furthermore, in the special case where
$\Omega=\dR^n$ decomposes into subdomains $\Omega_+$ and $\Omega_-$
with smooth compact boundaries such that the weight function $r$ is positive (negative) on $\Omega_+$ ($\Omega_-$, respectively) it is shown that the nonreal spectrum of $T$ consists only of normal eigenvalues which can
be characterized with Dirichlet-to-Neumann maps acting on interior and exterior domains.

The paper is organized as follows.
After the precise assumptions and basic facts explained in Section~\ref{222} the known case 
of a bounded domain $\Omega$ is discussed in Section~\ref{333} for completeness, see, e.g., \cite{CN93,F90,F95,P89}.
As one might expect it turns out that in this case the resolvent of $T$ is compact 
and hence $\sigma(T)$ consists only of eigenvalues with finite multiplicity.
Some additional facts on selfadjoint operators with finitely many negative squares 
in indefinite inner product spaces from \cite{CL89,L65,L82} imply that the nonreal spectrum of $T$ 
consists of at most finitely many eigenvalues. 
Section~\ref{444} deals with general unbounded domains.
If the spectrum or essential spectrum of $A$ is positive, then again 
abstract methods ensure that the nonreal spectrum of $T$ is bounded and consists of at most finitely many eigenvalues; cf. \cite{BP10,CL89,CN93,KMWZ03,L65,L82} and 
Theorems~\ref{known1} and \ref{known2}. 
One of our main results in the present paper states that without further assumptions on the operator $A$
the nonreal spectrum of $T$ remains bounded if a certain isomorphism $W$ which ensures the regularity of the critical point $\infty$ exists; cf. condition (I) in Theorem~\ref{bigthm}. 
In Section~\ref{omegar} the special case $\Omega=\dR^n$ with $r$ having negative sign outside a bounded set 
is studied. A sufficient condition in terms of the weight
function $r$ is given such that the nonreal spectrum of $T$ is bounded. A more detailed analysis is 
provided in Theorem~\ref{rnthm}, where a multidimensional variant of Glazmans decomposition method is used
to show that the nonreal spectrum of $T$ consist only of eigenvalues with finite multiplicity
which may accumulate to certain subsets of the real line. Finally, it is shown in Theorem~\ref{sigmat} 
how the nonreal spectrum of $T$
can be characterized with the help of Dirichlet-to-Neumann maps acting on interior and exterior domains
and a variant of Krein's resolvent formula for indefinite elliptic differential operators is obtained in Theorem~\ref{resformel}.

\section{Elliptic differential operators in $L^2(\Omega)$}\label{222}

In this preliminary section we define an elliptic differential expression $\cL$ with an indefinite
weight function on some domain $\Omega$ 
and we associate an unbounded differential operator in $L^2(\Omega)$ to $\cL$
which is selfadjoint with respect to an indefinite metric on $L^2(\Omega)$; cf. 
Theorem~\ref{tthm}.

\subsection{The elliptic differential expression}
Let $\Omega\subset\dR^n$ be a domain and let $\ell$ be the "formally selfadjoint" uniformly 
elliptic second order differential expression
\begin{equation}\label{ellell}
(\ell f)(x):=-\sum_{j,k=1}^n \left( \frac{\partial}{\partial x_j} a_{jk} \frac{\partial f}{\partial x_k}\right)(x)+(a\!\,f)(x),\quad x\in\Omega,
\end{equation}
with bounded coefficients $a_{jk}\in C^\infty(\Omega)$ satisfying $a_{jk}(x)=\overline{a_{kj}(x)}$ 
for all $x\in\Omega$ and $j,k=1,\dots,n$, the function $a\in L^\infty(\Omega)$ is real valued and 
\begin{equation*}
\sum_{j,k=1}^n a_{jk}(x)\xi_j\xi_k\geq C\sum_{k=1}^n\xi_k^2
\end{equation*}
holds for some $C>0$, all $\xi=(\xi_1,\dots,\xi_n)^\top\in\dR^n$ and $x\in\Omega$. 

In the following we investigate operators induced by the second order elliptic differential 
expression $\cL$ with the indefinite weight $r$ defined by
\begin{equation*}
(\cL f)(x):=\frac{1}{r(x)}\,(\ell f)(x),\qquad x\in\Omega.
\end{equation*}
Throughout this paper it is assumed that $r$ is a real valued function such that $r,r^{-1}\in L^\infty(\Omega)$ 
and each of the sets
\begin{equation}\label{omegapm}
 \Omega_+:=\bigl\{x\in\Omega: r(x)>0\bigr\}\quad\text{and}\quad \Omega_-:=\bigl\{x\in\Omega: r(x)<0\bigr\}
\end{equation}
has positive Lebesgue measure. Observe that $\Omega\backslash(\Omega_+\cup\Omega_-)$ is a Lebesgue null set. 
The restriction of the weight function $r$ onto $\Omega_\pm$ is denoted by $r_\pm$. 
Similarly, for a function $f$ defined on $\Omega$ the restriction onto $\Omega_\pm$ is denoted by
$f_\pm$. Moreover, $\ell_\pm$ and $\cL_\pm$ stand for the restrictions of the differential
expressions $\ell$ and $\cL$ onto $\Omega_\pm$.

\subsection{Differential operators in $L^2(\Omega)$ associated to $\ell$ and $\cL$}

To the differential expression $\ell$ we associate the elliptic differential operator
\begin{equation}\label{a}
Af:=\ell (f),\qquad \dom A=\bigl\{f\in H^1_0(\Omega):\ell (f) \in L^2(\Omega)\bigr\},
\end{equation}
where $H^1_0(\Omega)$ stands for the closure of $C_0^\infty(\Omega)$ in the Sobolev space $H^1(\Omega)$.
It is well known that $A$ is an unbounded selfadjoint operator in the Hilbert space $(L^2(\Omega),(\cdot,\cdot))$ with spectrum
semibounded from below by $\text{essinf}\,a$. This can be seen, e.g., with the help of
the sesquilinear form associated to $\ell$ and the first representation theorem from \cite{Kato}.

Besides the Hilbert space inner product $(\cdot,\cdot)$ in $L^2(\Omega)$ we will make use of the indefinite
inner product
\begin{equation}\label{indefprod}
[f,g]:=\int_\Omega f(x)\overline{g(x)}\,r(x)\,dx,\qquad f,g\in L^2(\Omega).
\end{equation}
The space $(L^2(\Omega),[\cdot,\cdot])$ is a so-called Krein space; cf. \cite{AI89,B74,K70,L65,L82}. 
Observe that $[\cdot,\cdot]$ is nonpositive on functions with support in $\Omega_-$
and nonnegative on functions with support in $\Omega_+$. 
Note also that the assumptions $r\in L^\infty(\Omega)$ and $r^{-1}\in L^\infty(\Omega)$ imply that
the multiplication operator $Rf=rf$, $f\in L^2(\Omega)$, is an isomorphism in $L^2(\Omega)$
with inverse $R^{-1}f=r^{-1}f$, $f\in L^2(\Omega)$. In particular,
$\ell(f)\in L^2(\Omega)$ if and only if $\cL(f)\in L^2(\Omega)$. 
Furthermore, the inner products $(\cdot,\cdot)$ and
$[\cdot,\cdot]$ are connected via 
\begin{equation}\label{rfg}
[f,g]=(Rf,g)\quad\text{and}\quad (f,g)=[R^{-1}f,g]\quad\text{for}\quad f,g\in L^2(\Omega).
\end{equation}

Next we introduce the differential operator $T$ associated to the indefinite
elliptic expression $\cL$ and we summarize some of its properties. The following theorem is 
a direct consequence of \eqref{rfg} and the selfadjointness of $A$. 

\begin{theorem}\label{tthm}
The differential operator
\begin{equation}\label{t}
Tf:=\cL (f),\qquad \dom T=\bigl\{f\in H^1_0(\Omega):\cL (f) \in L^2(\Omega)\bigr\},
\end{equation}
is selfadjoint with respect to the Krein space inner product $[\cdot,\cdot]$ in $L^2(\Omega)$,
and $T$ is connected with the elliptic differential operator $A$ in \eqref{a} via
\begin{equation*}
T= R^{-1}A\qquad\text{and}\qquad A=R T.
\end{equation*}
\end{theorem}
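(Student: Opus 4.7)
The plan is to deduce everything from two simple ingredients already at hand: the fact that $R$ is a bounded, boundedly invertible, \emph{positive (hence Hilbert-space selfadjoint)} multiplication operator on $L^2(\Omega)$, and the selfadjointness of $A$ in the Hilbert space $(L^2(\Omega),(\cdot,\cdot))$. The two algebraic claims come first, and the Krein-space selfadjointness falls out as a short adjoint computation.

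\textbf{Step 1 (Domain equality and factorizations).} I would first observe that for $f\in H^1_0(\Omega)$ we have $\cL(f)=R^{-1}\ell(f)$ pointwise a.e. Since $R$ and $R^{-1}$ map $L^2(\Omega)$ boundedly onto itself (this was noted right before \eqref{rfg}), the condition $\ell(f)\in L^2(\Omega)$ is equivalent to $\cL(f)\in L^2(\Omega)$. Hence $\dom T=\dom A$, and on this common domain $Tf=R^{-1}\ell(f)=R^{-1}Af$ and consequently $Af=RTf$. This already yields the two identities $T=R^{-1}A$ and $A=RT$.

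\textbf{Step 2 (Krein-space symmetry of $T$).} For $f,g\in\dom T=\dom A$, I use \eqref{rfg} and the selfadjointness of $R$ on $(L^2(\Omega),(\cdot,\cdot))$ (it is multiplication by a real $L^\infty$ function) to compute
\begin{equation*}
[Tf,g]=(RTf,g)=(Af,g)=(f,Ag)=(f,RTg)=(Rf,Tg)=[f,Tg].
\end{equation*}
Thus $T$ is symmetric in the Krein space $(L^2(\Omega),[\cdot,\cdot])$.

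\textbf{Step 3 (Maximality, i.e.\ Krein-selfadjointness).} Let $T^+$ denote the Krein-space adjoint. A vector $g\in L^2(\Omega)$ belongs to $\dom T^+$ precisely when the map $f\mapsto[Tf,g]=(Af,g)$ is $L^2$-bounded on $\dom A$; by the Hilbert-space selfadjointness of $A$, this happens exactly when $g\in\dom A=\dom T$, and in that case $(Af,g)=(f,Ag)$ so that, again by \eqref{rfg} and selfadjointness of $R$,
\begin{equation*}
[Tf,g]=(f,Ag)=(Rf,R^{-1}Ag)=[f,R^{-1}Ag]=[f,Tg]
\end{equation*}
for all $f\in\dom T$, hence $T^+g=Tg$. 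Combined with Step~2 this gives $T^+=T$, completing the proof.

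There is no real obstacle here; the only thing to watch is that the Krein-space inner product is \emph{not} definite, so I never try to read selfadjointness from a norm bound. Everything is pushed onto the Hilbert-space pairing via the identities $[f,g]=(Rf,g)=(f,Rg)$ of \eqref{rfg}, and the selfadjointness of $A$ does the work.
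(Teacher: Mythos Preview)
Your proof is correct and is exactly the argument the paper has in mind: the paper does not spell out a proof but simply states that the theorem ``is a direct consequence of \eqref{rfg} and the selfadjointness of $A$'', and in the remark immediately following notes $T^+=R^{-1}A^*=R^{-1}A$, which is precisely your Step~3 computation. Your Steps~1--3 unpack this one-line observation faithfully.
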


We remark that the adjoint of an (unbounded) operator with respect to a Krein space inner product
is defined in the same way as with respect to a usual scalar product. Here the
adjoint $T^+$ of $T$ with respect to $[\cdot,\cdot]$ can equivalently be defined by $T^+:=R^{-1}A^*=R^{-1}A$, where $^*$ denotes the adjoint with respect to $(\cdot,\cdot)$.
In particular, this implies $[Tf,g]=[f,Tg]$ for all $f,g\in\dom T$.

We also point out that the spectrum of an operator which is selfadjoint in the Krein space $(L^2(\Omega),[\cdot,\cdot])$ 
can be quite arbitrary. In particular, the spectrum is in general not a subset of $\dR$ 
and simple examples show that the spectrum can be empty or cover the whole complex plane.
However, the nonreal spectrum is necessarily symmetric with respect to the real line.

\subsection{Spectral points of closed operators}

Let $S$ be a closed operator in a Hilbert space. The \emph{resolvent set} $\rho(S)$ 
of $S$ consists of all $\lambda\in\dC$ such that $S-\lambda$ is bijective.
The complement of $\rho(S)$ in $\dC$ is the \emph{spectrum} $\sigma(S)$ of $S$. The \emph{point spectrum}
$\sigma_p(S)$ is the set of eigenvalues of $S$, i.e., those $\lambda\in\dC$ for which $S-\lambda$ 
is not injective. An eigenvalue $\lambda$ is said to be \emph{normal} if $\lambda$ is an isolated
point of $\sigma(S)$ and its (algebraic) multiplicity is finite. The {\it essential spectrum} $\sigma_{\text{\rm ess}}(S)$ 
consists of those points $\lambda\in\dC$ for which $S-\lambda$ is not a Semi-Fredholm operator. Recall that
the essential spectrum is stable under compact and relative compact perturbations; cf. \cite{EE,Kato}.
If $S$ is a selfadjoint operator, then $\sigma_{\text{\rm ess}}(S)$ consists of the accumulation 
points of $\sigma(S)$ and the isolated eigenvalues of infinite multiplicity; the
set of normal eigenvalues is the complement of $\sigma_{\text{\rm ess}}(S)$ in $\sigma(S)$.
Recall that the eigenvalues of a selfadjoint operator are semisimple.
We say that the \emph{positive} (\emph{negative}) 
\emph{spectrum} of (a not necessarily selfadjoint operator) $S$ has \emph{infinite multiplicity} if  
$\sigma(S)\cap(0,+\infty)$ ($\sigma(S)\cap (-\infty,0)$, respectively) contains infinitely many eigenvalues or points of the essential
spectrum of $S$.

\section{Spectral properties of indefinite elliptic operators on bounded domains}\label{333}

In this section we study the spectral properties of the indefinite elliptic operator $T$ in Theorem~\ref{tthm} in the case that $\Omega$ is a bounded
domain in $\dR^n$. Throughout this section it will be tacitly assumed that $\Omega$ 
is bounded, but no further (regularity) assumptions on the boundary are imposed. 

Let us first recall the following well-known theorem on the qualitative spectral properties
of the selfadjoint elliptic operator $A$ which is essentially a consequence of the compactness of the embedding
of $H^1_0(\Omega)$ into $L^2(\Omega)$ (see, e.g., \cite[Theorem~7.1]{W87}), the
ellipticity of $\ell$ and the boundedness of the
coefficient $a$.

\begin{theorem}\label{speca}
The spectrum of $A$ is bounded from below and consists of normal semisimple eigenvalues which
accumulate to $+\infty$.
\end{theorem}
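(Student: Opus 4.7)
The plan is to reduce everything to compactness of the resolvent, from which the qualitative spectral picture is immediate.

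First I would fix a real shift $\lambda_0<\essinf a$ (or any real number strictly below the spectrum of $A$) and consider $(A-\lambda_0)^{-1}$ as an operator in $L^2(\Omega)$. Since the paper has already established that $A$ is selfadjoint and semibounded from below with $\min\sigma(A)\geq\essinf a$, such a $\lambda_0$ lies in $\rho(A)$ and $(A-\lambda_0)^{-1}$ is a bounded selfadjoint operator on $L^2(\Omega)$. The goal is to show this resolvent is compact.

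The main step is to show that $\dom A$ with its graph norm embeds continuously into $H^1_0(\Omega)$ and then invoke Rellich--Kondrachov. Let $\mathfrak{a}$ denote the sesquilinear form associated with $\ell$ on $H^1_0(\Omega)$. Uniform ellipticity together with the lower bound $a\in L^\infty(\Omega)$ gives a G{\aa}rding inequality
\begin{equation*}
\mathfrak{a}[f,f]+c\,\|f\|_{L^2(\Omega)}^2\;\geq\; c'\,\|f\|_{H^1(\Omega)}^2,\qquad f\in H^1_0(\Omega),
\end{equation*}
with suitable constants $c,c'>0$ depending only on the ellipticity constant $C$ and $\|a\|_\infty$. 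By the first representation theorem invoked already, for $f\in\dom A$ one has $\mathfrak{a}[f,f]=(Af,f)$, and hence
\begin{equation*}
c'\,\|f\|_{H^1(\Omega)}^2\;\leq\; ((A-\lambda_0)f,f)+(c+\lambda_0)\|f\|_{L^2(\Omega)}^2\;\leq\; C_1\,\|f\|_{L^2(\Omega)}\bigl(\|(A-\lambda_0)f\|_{L^2(\Omega)}+\|f\|_{L^2(\Omega)}\bigr).
\end{equation*}
Thus $(A-\lambda_0)^{-1}$ maps $L^2(\Omega)$ boundedly into $H^1_0(\Omega)$. Since $\Omega$ is bounded, the embedding $H^1_0(\Omega)\hookrightarrow L^2(\Omega)$ is compact by Rellich--Kondrachov, so $(A-\lambda_0)^{-1}\colon L^2(\Omega)\to L^2(\Omega)$ is compact.

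With the resolvent compact, the spectral theorem for compact selfadjoint operators applied to $(A-\lambda_0)^{-1}$ yields a sequence of real eigenvalues $\mu_n\to 0$, each of finite multiplicity, with eigenvectors spanning $L^2(\Omega)$. Translating back to $A$ via $\lambda_n=\lambda_0+\mu_n^{-1}$ produces a sequence of real eigenvalues of finite multiplicity that can only accumulate at $+\infty$ (they cannot accumulate at $-\infty$ by semiboundedness, nor at any finite point, since $\mu_n\to 0$ and $A$ is selfadjoint so $\sigma_{\text{ess}}(A)=\emptyset$). All eigenvalues are isolated of finite algebraic multiplicity, hence normal, and semisimplicity is automatic from selfadjointness of $A$.

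The only delicate point is the G{\aa}rding-type estimate above; the coefficient $a$ is not assumed non-negative, so one genuinely needs the shift by $c$, but with $c\geq \|a\|_\infty$ this follows directly from $\sum a_{jk}\xi_j\xi_k\geq C|\xi|^2$ and $|a(x)|\leq \|a\|_\infty$. Everything else is a standard consequence of compactness of the resolvent and selfadjointness.
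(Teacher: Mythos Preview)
Your proposal is correct and follows exactly the route the paper indicates: the paper does not give a detailed proof but states that the result ``is essentially a consequence of the compactness of the embedding of $H^1_0(\Omega)$ into $L^2(\Omega)$, the ellipticity of $\ell$ and the boundedness of the coefficient $a$,'' which is precisely your G{\aa}rding-plus-Rellich argument leading to compactness of the resolvent. Your write-up simply fills in the standard details.
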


The principal result in this section is the following theorem, which is well known and follows from the more general 
and abstract considerations 
in \cite{F90,F95,P89} and \cite{CL89,L65,L82}. For the convenience of the reader a short proof is included.

\begin{theorem}\label{boundedthm}
The spectrum of $T$ consists of normal eigenvalues which
accumulate to $+\infty$ and $-\infty$. The nonreal spectrum of $T$ is bounded and consists
of at most finitely many normal eigenvalues which are symmetric with respect to the real line.
\end{theorem}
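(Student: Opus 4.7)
The plan is to combine three ingredients: compactness of the resolvent of $T$, a similarity that converts a shifted version of $T$ into the inverse of a compact selfadjoint operator on $L^2(\Omega)$, and the Krein--Langer theory of selfadjoint operators with finitely many negative squares in a Krein space.

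First I would pass to a positive case. Fix a real constant $c>-\min\sigma(A)$, which is finite by Theorem~\ref{speca}, and set $\tilde A:=A+cI$, $\tilde T:=R^{-1}\tilde A=T+cR^{-1}$, so that $\tilde A$ is a positive definite selfadjoint operator with compact inverse. A direct computation with the bounded operator $\tilde A^{1/2}$ gives
\[
\tilde A^{1/2}\,\tilde T^{-1}\,\tilde A^{-1/2}\;=\;\tilde A^{-1/2}\,R\,\tilde A^{-1/2}\;=:\;K,
\]
so $\tilde T^{-1}$ is similar in $L^2(\Omega)$ to the compact selfadjoint operator $K$. Since $r$ takes both signs on sets of positive measure and $\tilde A^{-1/2}$ has dense range, $K$ has infinitely many positive and infinitely many negative eigenvalues, all accumulating at $0$; inverting, $\tilde T$ has compact resolvent, purely real spectrum, and two sequences of eigenvalues tending to $+\infty$ and $-\infty$. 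Because $T=\tilde T-cR^{-1}$ is a bounded perturbation on the same domain, $T$ itself has compact resolvent, and hence $\sigma(T)$ consists of normal eigenvalues with $\infty$ as the only possible accumulation point.

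Next I would control the nonreal spectrum via the Krein space structure from Theorem~\ref{tthm}. Symmetry about $\dR$ is immediate from $T^+=T$. The identity $[Tf,f]=(RTf,f)=(Af,f)$ for $f\in\dom T$ transfers the signature of the Hilbert-space form $(Af,f)$ onto the quadratic form of $T$ in the Krein space; since $A$ has compact resolvent and is semibounded from below, it has only finitely many negative eigenvalues, say $\kappa$ counted with multiplicity, so $[T\cdot,\cdot]$ has $\kappa$ negative squares. By the cited results \cite{CL89,L65,L82}, a selfadjoint operator in a Krein space with $\kappa$ negative squares has at most $\kappa$ nonreal eigenvalues in the open upper half-plane, each of finite algebraic multiplicity. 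Combined with the compactness of the resolvent, this settles the statement on the nonreal spectrum.

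Finally, to see that the real eigenvalues accumulate at \emph{both} $+\infty$ and $-\infty$, I would transfer the corresponding property of $\tilde T$ via the homotopy $T_s:=\tilde T-sR^{-1}$, $s\in[0,c]$: each $T_s$ has compact resolvent and, by the previous paragraph, at most $\kappa$ pairs of nonreal eigenvalues, so at most $\kappa$ real eigenvalue pairs of $\tilde T$ can merge and leave $\dR$ as $s$ runs from $0$ to $c$. Since $\tilde T=T_0$ has infinitely many eigenvalues on each of $(0,+\infty)$ and $(-\infty,0)$, so does $T=T_c$. A direct alternative is a two-sided minimax characterization for the indefinite problem $Af=\lambda Rf$, using test functions supported in $\Omega_+$ and $\Omega_-$ to drive the Rayleigh quotient $(Af,f)/(Rf,f)$ to $+\infty$ and $-\infty$ respectively. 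I expect this last step to be the principal obstacle: the Krein--Langer machinery delivers the nonreal-spectrum bound for free and compact resolvent gives discreteness, but separating the real spectrum into two infinite tails requires a sign-sensitive argument that truly exploits the indefinite weight.
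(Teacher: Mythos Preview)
Your similarity $\tilde A^{1/2}\tilde T^{-1}\tilde A^{-1/2}=\tilde A^{-1/2}R\tilde A^{-1/2}=:K$ is a nice alternative to the paper's route to compactness, and for $\tilde T$ it delivers real spectrum and two-sided accumulation directly. Two points need care. First, $\tilde A^{1/2}$ is unbounded, not bounded as you write, so this is not a bounded similarity; you should argue directly that the compact operators $\tilde T^{-1}$ and $K$ share their nonzero eigenvalues. Second, and more seriously, the step ``$T=\tilde T-cR^{-1}$ is a bounded perturbation, hence $T$ has compact resolvent'' presupposes $\rho(T)\neq\emptyset$, which is precisely what the paper isolates as Lemma~\ref{rhot}. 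You can close this with your own ingredients: since $\tilde T$ has compact resolvent, $T-\lambda$ is Fredholm of index zero for every $\lambda\in\dC$, so $\sigma(T)=\sigma_p(T)$; the $\kappa$-bound on eigenvalues in $\dC^+$ (which follows algebraically from $[T\cdot,\cdot]=(A\cdot,\cdot)$ having $\kappa$ negative squares and does not itself need $\rho(T)\neq\emptyset$) then rules out $\sigma(T)=\dC$. After this reordering your treatment of the nonreal spectrum coincides with the paper's.

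The genuine gap is the accumulation at $\pm\infty$. The homotopy argument does not work as stated: the uniform bound of $\kappa$ nonreal pairs at each fixed $s$ says nothing about real eigenvalues migrating across $0$, and for the non-normal family $T_s$ there is no eigenvalue-stability estimate preventing infinitely many positive eigenvalues of $\tilde T$ from ending up negative at $s=c$. The minimax sketch is likewise not a proof in the indefinite setting. The paper's argument is short and uses a different idea: once $T$ is definitizable it has a spectral function, and if $T$ had only finitely many positive eigenvalues then the spectral subspace $E((-\infty,0))L^2(\Omega)$ would have finite codimension; on this subspace $[\cdot,\cdot]$ is, up to a finite-dimensional defect, negative definite, forcing $[\cdot,\cdot]$ to have only finitely many positive squares. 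But $[\cdot,\cdot]$ is positive definite on the infinite-dimensional space of functions supported in $\Omega_+$, a contradiction (this is the reasoning in \cite[Proof of Proposition~1.8]{CL89}). The sign-sensitive input you anticipated is thus the signature of the Krein inner product $[\cdot,\cdot]$ itself, not a perturbation estimate.
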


Before we prove this theorem a preparatory lemma on the resolvent set of $T$ will be proved.

\begin{lemma}\label{rhot}
The set $\rho(T)$ is nonempty.
\end{lemma}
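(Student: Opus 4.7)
The plan is to reduce the question to the invertibility of an analytic family of compact perturbations of the identity, and then to apply the analytic Fredholm theorem.

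By Theorem~\ref{speca}, $A$ is selfadjoint, semibounded from below, and has compact resolvent. I fix any real $\mu<\inf\sigma(A)$, so that $\mu\in\rho(A)$ and $(A-\mu)^{-1}\colon L^2(\Omega)\to L^2(\Omega)$ is a bounded, compact, positive operator. For every $\lambda\in\C$ the identity
\[
A-\lambda R=(A-\mu)+(\mu-\lambda R)=(A-\mu)\bigl[I+K_\lambda\bigr]\quad\text{on }\dom A,\qquad K_\lambda:=(A-\mu)^{-1}(\mu-\lambda R),
\]
holds, and $K_\lambda$ is a compact operator on $L^2(\Omega)$ depending affinely (hence holomorphically) on $\lambda$. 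Combining this with the factorization $T-\lambda=R^{-1}(A-\lambda R)$ from Theorem~\ref{tthm}, together with the fact that $R^{-1}$ is a bounded bijection of $L^2(\Omega)$, shows that $\lambda\in\rho(T)$ if and only if $I+K_\lambda$ is a bijection of $L^2(\Omega)$.

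The family $\lambda\mapsto I+K_\lambda$ therefore consists of Fredholm operators of index zero depending holomorphically on $\lambda\in\C$, and the analytic Fredholm theorem on the connected set $\C$ yields the following dichotomy: the exceptional set
\[
\Sigma:=\{\lambda\in\C\colon I+K_\lambda\text{ is not bijective}\}
\]
is either all of $\C$ or a discrete subset of $\C$; in the latter case $\rho(T)\supset\C\setminus\Sigma$ is nonempty and we are done.

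The remaining and, as I expect, most delicate step is to rule out $\Sigma=\C$ by producing a single $\lambda_0$ for which $I+K_{\lambda_0}$ is bijective. A direct computation gives $I+K_0=A(A-\mu)^{-1}$, which is bijective precisely when $0\in\rho(A)$; this disposes of the case $0\in\rho(A)$ with $\lambda_0=0$. If $0\in\sigma(A)$, then by Theorem~\ref{speca} it is an isolated eigenvalue with finite-dimensional eigenspace $\ker A$, and my plan is to invoke analytic (Rellich--Kato) perturbation theory for the selfadjoint real-analytic family $\lambda\mapsto A-\lambda R$, $\lambda\in\R$: the eigenvalue $0$ of $A=A-0\cdot R$ splits into finitely many analytic branches $\nu_j(\lambda)$ with $\nu_j(0)=0$ and leading behavior $\nu_j(\lambda)\sim-m_j\lambda$ as $\lambda\to0$, where the $m_j\in\R$ are the eigenvalues of the Hermitian matrix $\bigl([e_k,e_\ell]\bigr)_{k,\ell}$ that represents the restriction of $[\cdot,\cdot]$ to $\ker A$. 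Whenever this matrix is nondegenerate every branch leaves $0$ for small nonzero real $\lambda$, so such $\lambda$ lies in $\rho(T)$; in the degenerate case one can either extend the expansion of the $\nu_j$ to higher order or decompose $L^2(\Omega)=\ker A\oplus(\ker A)^\perp$ and pass to a finite-dimensional Schur complement of $A-\lambda R$, using the invertibility of $A$ on $(\ker A)^\perp\cap\dom A$, to exhibit some small $\lambda\in\rho(T)$ and conclude.
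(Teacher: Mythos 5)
Your reduction is fine as far as it goes: the factorization $A-\lambda R=(A-\mu)(I+K_\lambda)$, the equivalence $\lambda\in\rho(T)\Leftrightarrow I+K_\lambda$ bijective, the analytic Fredholm alternative, and the two easy cases ($0\in\rho(A)$, and Gram matrix $([e_k,e_\ell])$ on $\ker A$ nondegenerate) are all correct, and this is a genuinely different route from the paper, which restricts $A$ to $(\ker A)^\perp$, extends that restriction to a selfadjoint $\widetilde A$ with $0\in\rho(\widetilde A)$, and then invokes results on finite-rank perturbations of definitizable operators in Krein spaces (\cite{CL89}, \cite{ABT08}) to transfer $\rho(\widetilde T)\neq\emptyset$ to $T$. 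The problem is that your argument is not closed at exactly the point where the real difficulty sits, namely the degenerate case. Both of your proposed remedies --- pushing the expansion of the branches $\nu_j$ to higher order, or passing to the finite-dimensional Schur complement of $A-\lambda R$ with respect to $\ker A\oplus(\ker A)^\perp$ --- produce a point of $\rho(T)$ only if the relevant analytic quantity (a branch $\nu_j$, respectively the determinant of the Schur complement) is not \emph{identically} zero near $\lambda=0$. Ruling out identical vanishing is precisely the assertion to be proved (that not every small $\lambda$ lies in $\sigma(T)$), and it cannot be dismissed a priori: for selfadjoint operators in Krein spaces the spectrum may be all of $\dC$, and the standard theorems converting ``$[T\cdot,\cdot]$ has finitely many negative squares'' into spectral information all presuppose $\rho(T)\neq\emptyset$. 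So, as written, the decisive step is circular.

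The gap is repairable inside your framework, but it needs an extra idea. If some branch satisfied $\nu_j\equiv 0$ on a real neighbourhood of $0$, Rellich's theorem would give a real-analytic family of normalized eigenfunctions $f_\lambda$ with $(A-\lambda R)f_\lambda=0$; differentiating this identity and pairing with $f_\lambda$ gives $(Rf_\lambda,f_\lambda)=0$, hence $(Af_\lambda,f_\lambda)=0$, and for $\lambda\neq\mu$ one finds $(Rf_\lambda,f_\mu)=(Af_\lambda,f_\mu)=0$. Since the $f_\lambda$ are eigenfunctions of $T$ at distinct eigenvalues they are linearly independent, so the form $(A\cdot,\cdot)$ would vanish identically on an infinite-dimensional subspace of $\dom A$. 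This is impossible: writing $f=f_-+f_0+f_+$ along the spectral subspaces of $A$ for $(-\infty,0)$, $\{0\}$ and $(0,\infty)$, the identity $(Af_+,f_+)=-(Af_-,f_-)$ forces $f_+=0$ whenever $f_-=f_0=0$, so the map $f\mapsto f_-+f_0$ is injective on any such subspace, whose dimension is therefore at most the number of negative eigenvalues of $A$ plus $\dim\ker A<\infty$. With a supplement of this kind (and the routine observation that the spectrum of $A-\lambda R$ near $0$ consists, for small $\lambda$, only of the finitely many branches $\nu_j$) your plan does prove the lemma; without it, or without the paper's Krein-space perturbation results, it does not.
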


\begin{proof}
If $0$ is not a normal eigenvalue of $A$, then $0\in\rho(A)$ and 
it follows from $T=R^{-1}A$ that $T^{-1}=A^{-1}R\in\cL(L^2(\Omega))$
holds, i.e., $0\in\rho(T)$. Therefore, assume that $0\in\sigma(A)$, that is, $0$ is an isolated
eigenvalue of finite multiplicity of $A$ by Theorem~\ref{speca}. The restriction 
\begin{equation*}
B:=A\upharpoonright (\dom A\cap(\ker A)^\bot)
\end{equation*}
of $A$ on the orthogonal complement of $\ker A$ in $L^2(\Omega)$
is regarded as a nondensely defined symmetric operator in $L^2(\Omega)$ with finite equal defect numbers. 
Note that $B$ is injective and that $\ran B=(\ker A)^\bot$ is closed and has finite codimension. 
Hence there exists a selfadjoint operator $\widetilde A$ in $L^2(\Omega)$ which is an extension of $B$ such that $0\in\rho(\widetilde A)$.
Furthermore, since $B$ is a finite dimensional restriction of both $A$ and $\widetilde A$ it follows that
\begin{equation*}
\dim\ran\bigl((A-\lambda)^{-1}-(\widetilde A-\lambda)^{-1}\bigr)\leq\dim\ker A<\infty 
\end{equation*}
holds for all $\lambda\in\rho(A)\cap\rho(\widetilde A)$ and hence $\sigma(\widetilde A)$ is semibounded from
below and consists of normal eigenvalues. 

The operator $\widetilde T:=R^{-1}\widetilde A$ is a selfadjoint operator in the Krein space $(L^2(\Omega),[\cdot,\cdot])$
and from $0\in\rho(\widetilde A)$ we conclude $0\in\rho(\widetilde T)$. Furthermore, since $\widetilde A$
is semibounded from below and $[\widetilde Tf,g]=(\widetilde A f,g)$ holds for all 
$f,g\in\dom \widetilde T=\dom\widetilde A$, it follows that the form 
$[\widetilde T\cdot,\cdot]$ has finitely many negative squares. It is easy to see that $\widetilde T$ and
$T$ are both finite dimensional extensions of the nondensely defined operator $S:=R^{-1} B$.
Now $\rho(T)\not=\emptyset$ follows form a slight modification of \cite[Proposition~1.1]{CL89}, 
see also \cite[Corollary~2.5]{ABT08}.
\end{proof}

\begin{proof}[Proof of Theorem~\ref{boundedthm}]
Observe that by Theorem~\ref{speca} the resolvent $(A-\lambda)^{-1}$ is compact for all 
$\lambda\in\rho(A)$ and that Lemma~\ref{rhot} implies $\rho(T)\cap\rho(A)\not=\emptyset$. A simple
computation shows that the relation 
\begin{equation*}
 (T-\lambda)^{-1}=(A-\lambda)^{-1}R-\lambda(A-\lambda)^{-1}(I-R)(T-\lambda)^{-1}
\end{equation*}
holds for all $\lambda\in\rho(A)\cap\rho(T)$,
and since the right hand side is a compact operator the same holds for
the left hand side. Hence $\sigma(T)$ consists of normal eigenvalues. 
As the negative spectrum of $A$ consists of at most finitely many normal eigenvalues
the form $[T\cdot,\cdot]=(A\cdot,\cdot)$ has finitely many negative squares and it follows
from the general results in \cite{CL89,L65,L82} that the nonreal spectrum of $T$ consists 
of at most finitely many normal eigenvalues which are symmetric with respect to the real line.
Finally, the assumption that the sets $\Omega_+$ and $\Omega_-$ in \eqref{omegapm} have positive Lebesgue measure
imply that the indefinite inner product $[\cdot,\cdot]$ in \eqref{indefprod} has infinitely many positive and
negative squares. The reasoning in \cite[Proof of Proposition~1.8]{CL89} shows
that the positive spectrum of $T$, as well as the negative spectrum of $T$ is of infinite multiplicity, and hence the 
real eigenvalues of $T$ accumulate to $+\infty$ and $-\infty$.
\end{proof}

\section{Spectral properties of indefinite elliptic operators on unbounded domains}\label{444}

In this section we study the spectral properties of the indefinite elliptic operator $T$ in \eqref{t} on an unbounded
domain $\Omega\subset\dR^n$. 
Since for an unbounded domain the embedding of $H^1_0(\Omega)$ into $L^2(\Omega)$ is in general not compact
also the resolvent of 
the selfadjoint operator $A$ in \eqref{a} is in general not compact and hence essential spectrum may occur.
Only the following weaker variant of Theorem~\ref{speca} holds.

\begin{theorem}\label{speca2}
The spectrum of $A$ is bounded from below and accumulates to $+\infty$.
\end{theorem}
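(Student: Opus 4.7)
The plan is to derive both statements from the sesquilinear form associated with $\ell$,
\begin{equation*}
\mathfrak{t}[f,g]:=\int_\Omega\Biggl(\sum_{j,k=1}^n a_{jk}(x)\,\frac{\partial f}{\partial x_k}\,\overline{\frac{\partial g}{\partial x_j}}\;+\;a(x)\,f\,\bar g\Biggr)dx,\qquad \dom\mathfrak{t}:=H^1_0(\Omega),
\end{equation*}
together with a standard concentration (rescaling) argument that produces approximate eigenvectors with arbitrarily large Rayleigh quotients.

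\emph{Semiboundedness from below.} Uniform ellipticity of $\ell$ and $a\in L^\infty(\Omega)$ yield the lower bound $\mathfrak{t}[f,f]\geq C\|\nabla f\|_{L^2}^2+(\essinf a)\|f\|_{L^2}^2\geq (\essinf a)\|f\|_{L^2}^2$ for every $f\in H^1_0(\Omega)$, while the boundedness of the coefficients $a_{jk}$ gives a matching upper bound, so the form norm induced by $\mathfrak{t}$ is equivalent to the $H^1$-norm. Consequently $\mathfrak{t}$ is symmetric, closed, and semibounded from below on $\dom\mathfrak{t}=H^1_0(\Omega)$, and Kato's first representation theorem assigns to it a unique semibounded selfadjoint operator on $L^2(\Omega)$; integration by parts identifies this operator with $A$ in \eqref{a}. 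In particular $\sigma(A)\subset[\essinf a,+\infty)$.

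\emph{Accumulation at $+\infty$.} Pick $x_0\in\Omega$ and $r_0>0$ with $\overline{B(x_0,r_0)}\subset\Omega$, choose $\phi\in C_0^\infty(B(x_0,r_0))$ with $\|\phi\|_{L^2}=1$, and for $\lambda\geq 1$ set
\begin{equation*}
\phi_\lambda(x):=\lambda^{n/2}\,\phi\bigl(x_0+\lambda(x-x_0)\bigr).
\end{equation*}
Then $\phi_\lambda\in C_0^\infty(\Omega)\subset\dom A$, and a change of variables gives $\|\phi_\lambda\|_{L^2}=1$ together with $\|\partial_k\phi_\lambda\|_{L^2}^2=\lambda^2\|\partial_k\phi\|_{L^2}^2$ for $k=1,\dots,n$. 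Ellipticity and $a\in L^\infty$ therefore produce
\begin{equation*}
(A\phi_\lambda,\phi_\lambda)\;=\;\mathfrak{t}[\phi_\lambda,\phi_\lambda]\;\geq\;C\lambda^2\|\nabla\phi\|_{L^2}^2-\|a\|_{L^\infty}\longrightarrow+\infty\qquad(\lambda\to\infty).
\end{equation*}
Since $A$ is selfadjoint, the spectral theorem forces $\sup\sigma(A)=+\infty$, and as $\sigma(A)$ is closed this is exactly the claimed accumulation at $+\infty$.

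The main point, rather than an obstacle, is the following difference from Theorem~\ref{speca}: on an unbounded $\Omega$ the embedding $H^1_0(\Omega)\hookrightarrow L^2(\Omega)$ need not be compact, so the resolvent of $A$ need not be compact and essential spectrum may appear. Consequently the stronger conclusion of Theorem~\ref{speca} (normal semisimple eigenvalues accumulating at $+\infty$) cannot be salvaged, and only the weaker qualitative statement of Theorem~\ref{speca2} survives. The concentration/rescaling bookkeeping in the second step is the only computational point requiring care.
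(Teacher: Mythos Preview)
Your proof is correct. The paper itself does not supply a proof of Theorem~\ref{speca2}; it is stated as a well-known fact without argument. The semiboundedness portion is already indicated earlier in Section~\ref{222} (immediately after \eqref{a}) via the sesquilinear form associated to $\ell$ and Kato's first representation theorem, which is exactly your first step. For the accumulation at $+\infty$ the paper offers nothing beyond the bare statement, and your rescaling construction $\phi_\lambda(x)=\lambda^{n/2}\phi(x_0+\lambda(x-x_0))$ is a clean and standard way to fill in that detail: it produces unit vectors in $C_0^\infty(\Omega)\subset\dom A$ whose Rayleigh quotients diverge, forcing $\sup\sigma(A)=+\infty$.
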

 
If the lower bound $\min\sigma(A)$ of the spectrum of $A$ or the lower bound $\min\sigma_\ess(A)$ of the essential spectrum 
of $A$ is positive, then it is known that $T=R^{-1}A$ is positive in the Krein space $(L^2(\Omega),[\cdot,\cdot])$
or has a finite number of negative squares, respectively. 
For the convenience of the reader we recall these and some other facts in Theorem~\ref{known1} and Theorem~\ref{known2} below. The proofs of the statements are essentially contained in \cite{B74,CL89,L65,L82}, see also \cite[Theorem~3.3]{KMWZ03}, \cite[Theorem~3.1]{BT07}
and \cite[Proposition~1.6]{CN95-2}. 

\begin{theorem}\label{known1}
If $\min\sigma(A)>0$, then the spectrum of $T$ is real, $0\in\rho(T)$, and $T$
has positive and negative spectrum,
both of infinite multiplicities.
\end{theorem}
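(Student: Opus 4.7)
The plan is to leverage the hypothesis $\min\sigma(A)>0$ to reduce the Krein-space spectral problem for $T$ to a Hilbert-space question, and then to combine the resulting real spectrum with the indefiniteness of $[\cdot,\cdot]$ to obtain the multiplicity statements. The central computation is
\[
[Tf,f]=(RR^{-1}Af,f)=(Af,f)\geq c\,\|f\|^2,\qquad f\in\dom T,
\]
with $c:=\min\sigma(A)>0$; thus $T$ is a \emph{uniformly positive} selfadjoint operator in the Krein space $(L^2(\Omega),[\cdot,\cdot])$.

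First, $0\in\rho(T)$ is immediate, since $A^{-1}\in\cL(L^2(\Omega))$ gives $T^{-1}=A^{-1}R\in\cL(L^2(\Omega))$ via Theorem~\ref{tthm}. For the reality of $\sigma(T)$ I would invoke the classical theorem of Langer (see \cite{L82,CL89}) that any uniformly positive selfadjoint operator in a Krein space is similar to a selfadjoint Hilbert-space operator and therefore has real spectrum. A short direct verification also works: for $\lambda=\alpha+i\beta$ with $\beta\neq 0$, choose $\theta$ so that $\cos\theta\,\alpha+\sin\theta\,\beta=0$ and $\cos\theta=|\beta|/|\lambda|>0$; then
\[
\Re\bigl(e^{-i\theta}((A-\lambda R)f,f)\bigr)=\cos\theta\,(Af,f)\geq \frac{c\,|\Im\lambda|}{|\lambda|}\,\|f\|^2,
\]
so $A-\lambda R$ is bounded below. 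The same estimate applied to $\bar\lambda$ shows $(A-\lambda R)^{*}=A-\bar\lambda R$ is injective, so $A-\lambda R$ has dense range and is in fact bijective; hence $T-\lambda=R^{-1}(A-\lambda R)$ is bijective, which gives $\sigma(T)\subset\dR$.

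For the infinite multiplicities, I would use the spectral function of the uniformly positive Krein-space operator $T$ to form the spectral projections $E_\pm:=E_T(\dR_\pm)$, which are well defined because $0\in\rho(T)$, and yield a decomposition $L^2(\Omega)=E_+L^2(\Omega)\dotplus E_-L^2(\Omega)$. On an eigenvector $f$ with $Tf=\lambda f$ one has $(Af,f)=\lambda[f,f]$, so $\sgn\lambda=\sgn[f,f]$; propagating this through the spectral function, $E_+L^2(\Omega)$ is uniformly positive and $E_-L^2(\Omega)$ uniformly negative in $(L^2(\Omega),[\cdot,\cdot])$, and the two are $[\cdot,\cdot]$-orthogonal. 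This is therefore a \emph{fundamental} decomposition of the Krein space, and standard Krein-space theory identifies $\dim E_\pm L^2(\Omega)$ with the positive and negative indices of $(L^2(\Omega),[\cdot,\cdot])$. Both indices are infinite since $\Omega_\pm$ have positive Lebesgue measure; but if the positive (respectively negative) spectrum of $T$ consisted of only finitely many normal eigenvalues, then $E_+L^2(\Omega)$ (respectively $E_-L^2(\Omega)$) would be a finite-dimensional sum of root subspaces, a contradiction. The main obstacle is exactly this uniform positivity/negativity of $E_\pm L^2(\Omega)$: pointwise positivity of $[f,f]$ on each individual eigenvector is not enough, and one needs the spectral function of uniformly positive Krein-space operators as developed in \cite{L82,CL89}.
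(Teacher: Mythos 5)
Your first two steps are fine: $[Tf,f]=(Af,f)\geq c\,\Vert f\Vert^2$ indeed makes $T$ a uniformly positive selfadjoint operator in $(L^2(\Omega),[\cdot,\cdot])$, the identity $T^{-1}=A^{-1}R$ gives $0\in\rho(T)$, and your direct lower bound for $A-\lambda R$ (together with $(A-\lambda R)^*=A-\bar\lambda R$ and $T-\lambda=R^{-1}(A-\lambda R)$) correctly yields $\sigma(T)\subset\dR$. Note that the paper itself offers no proof of this theorem but refers to \cite{B74,CL89,L65,L82,KMWZ03,BT07,CN95-2}, so a self-contained argument is welcome; but the similarity theorem you invoke as an alternative is false as stated: a uniformly positive selfadjoint operator in a Krein space need \emph{not} be similar to a Hilbert-space selfadjoint operator. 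Similarity is equivalent (here, with $0\in\rho(T)$) to the regularity of the critical point $\infty$, and this can fail even when $A$ is uniformly positive -- counterexamples of Volkmer/Fleige type for indefinite Sturm--Liouville operators $\tfrac{1}{r}(-\tfrac{d^2}{dx^2})$ with Dirichlet conditions are exactly of the form $T=R^{-1}A$ with $\min\sigma(A)>0$. This is precisely why the paper has to impose the extra condition (I) in Theorem~\ref{bigthm} before it can form the projections $E_\pm$.

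The same issue is a genuine gap in your multiplicity argument: you take $E_\pm:=E_T(\dR_\pm)$ and claim they are ``well defined because $0\in\rho(T)$''. The hypothesis $0\in\rho(T)$ only removes the critical point $0$; the spectral function of a definitizable (here: nonnegative) operator is guaranteed only on bounded admissible intervals, and the bounded projections corresponding to the unbounded intervals $(0,+\infty)$ and $(-\infty,0)$ exist precisely when $\infty$ is a regular critical point -- which, as above, does not follow from $\min\sigma(A)>0$. So the fundamental decomposition $L^2(\Omega)=E_+L^2(\Omega)\,\dot+\,E_-L^2(\Omega)$ you build your contradiction on is not available. The conclusion can still be reached, but along the route the paper indicates (cf.\ the proof of Theorem~\ref{boundedthm} and \cite[Proof of Proposition~1.8]{CL89}): since $\Omega_\pm$ have positive Lebesgue measure, $[\cdot,\cdot]$ has infinitely many positive and negative squares; if, say, the positive spectrum of $T$ had finite multiplicity, it would be a finite set in $(0,+\infty)$, and a bounded admissible interval $\Delta\subset(0,+\infty)$ covering it gives a decomposition of $L^2(\Omega)$ into the finite-dimensional subspace $E(\Delta)L^2(\Omega)$ and a complementary $T$-invariant subspace which is nonpositive (its spectrum lies in $(-\infty,0)$ and $T$ is nonnegative), contradicting the infinite positive index of the Krein space. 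This uses the spectral function only on bounded intervals and avoids the unproved regularity of $\infty$.
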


\begin{theorem}\label{known2}
If $\min\sigma_\ess(A)>0$, then the essential spectrum of $T$ is real and $T$
has positive and negative spectrum,
both of infinite multiplicities.  The nonreal spectrum of $T$
is bounded and consists of at most finitely many normal eigenvalues which are symmetric with respect to the real line.
\end{theorem}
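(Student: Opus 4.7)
The plan is to reduce Theorem~\ref{known2} to Theorem~\ref{known1} via a finite-rank modification of $A$, and to derive the finiteness of the nonreal spectrum from the Krein space theory for operators whose associated quadratic form has finitely many negative squares. The assumption $\min\sigma_\ess(A)>0$ plays a double role: it ensures that only finitely many eigenvalues of $A$ lie below $\min\sigma_\ess(A)$, and it permits pushing $A$ above zero by a finite-rank perturbation.

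Concretely, I would pick $c\in(0,\min\sigma_\ess(A))\setminus\sigma_p(A)$, set $E:=E_A((-\infty,c])$ and note that $\dim\ran E<\infty$ by Theorem~\ref{speca2} combined with the hypothesis. For $\beta>0$ sufficiently large, the selfadjoint operator $\widetilde A:=A+\beta E$ satisfies $\min\sigma(\widetilde A)>0$, so Theorem~\ref{known1} applied to $\widetilde T:=R^{-1}\widetilde A$ yields $\sigma(\widetilde T)\subset\dR$, $0\in\rho(\widetilde T)$, together with the infinite multiplicity of $\sigma(\widetilde T)\cap(0,\infty)$ and of $\sigma(\widetilde T)\cap(-\infty,0)$. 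Since
\begin{equation*}
T-\widetilde T = R^{-1}(A-\widetilde A) = -\beta R^{-1}E
\end{equation*}
is of finite rank, the stability of the essential spectrum under compact perturbations transfers $\sigma_\ess(T)=\sigma_\ess(\widetilde T)\subset\dR$, and the same comparison preserves the infinite multiplicity of the positive and negative essential spectrum. The nonemptiness of $\rho(T)$ implicit in this step is obtained as in Lemma~\ref{rhot} via \cite[Proposition~1.1]{CL89}.

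The boundedness and finiteness of the nonreal spectrum of $T$ come from a separate route. Because $A$ has only finitely many negative eigenvalues counted with multiplicity, the form $[Tf,f]=(Af,f)$, $f\in\dom T$, has finitely many negative squares. The classical results \cite{CL89,L65,L82} for selfadjoint operators in a Krein space with this property then imply that the nonreal spectrum of $T$ consists of at most finitely many eigenvalues of finite algebraic multiplicity, symmetric with respect to $\dR$; since $\sigma_\ess(T)\subset\dR$, these nonreal points are automatically isolated in $\sigma(T)$ and hence normal. The main obstacle I expect is the bookkeeping of the perturbation step: one has to verify that finite rank of $T-\widetilde T$ gives the compactness of resolvent differences needed for Weyl-type stability to apply (this requires $\rho(T)\cap\rho(\widetilde T)\neq\emptyset$), and that the Krein--Langer finiteness theorem is invoked in exactly the form required for the unbounded Krein space selfadjoint operator $T$ at hand.
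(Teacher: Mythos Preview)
The paper does not provide its own proof of Theorem~\ref{known2}; it simply records the result and refers to \cite{B74,CL89,L65,L82} (together with \cite{KMWZ03,BT07,CN95-2}) for the argument. Your proposal is correct and is essentially the route taken in those references: the key observation that $[T\cdot,\cdot]=(A\cdot,\cdot)$ has finitely many negative squares (since $\min\sigma_\ess(A)>0$ forces only finitely many negative eigenvalues of $A$), combined with the finite-rank comparison with a uniformly positive $\widetilde A$ and the perturbation result \cite[Proposition~1.1]{CL89}/\cite[Corollary~2.5]{ABT08}, is exactly the mechanism behind the cited Kre\u{\i}n--Langer theory, and your handling of $\rho(T)\neq\emptyset$ and of $\sigma_\ess(T)=\sigma_\ess(\widetilde T)$ via the finite-rank resolvent difference is the standard way this is made rigorous.
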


In the next step the assumption $\min\sigma_\ess(A)>0$ will be dropped. 
The following considerations and Theorem~\ref{bigthm} below are partly inspired by general results on selfadjoint operators in Krein spaces and the regularity of the critical point $\infty$ from \cite{C85,CN93,L82}
and \cite[Proof of Theorem 5.4]{BP10}.
Fix some $\nu<\min\sigma(A)$ and define the space $\cH_s$, $s\in[0,2]$, as the domains of the $\frac{s}{2}$-th powers of the positive operator $A-\nu$,
\begin{equation*}
\cH_s:=\dom \bigl((A-\nu)^\frac{s}{2}\bigr),\qquad s\in [0,2].
\end{equation*}
Note that $\cH=\cH_0$, $\dom A=\cH_2$, and the form domain of $A$ is $\cH_1$. The spaces $\cH_s$
become Hilbert spaces when they are equipped with the usual inner products, the induced 
topologies do not depend on the particular choice of $\nu<\min\sigma(A)$; cf. \cite{Kato}.

The following theorem is one of the main results of the present paper. Under an additional
abstract condition from \cite{CN93} it will be shown that the nonreal spectrum of the indefinite elliptic operator
is bounded. Roughly speaking, this condition is satisfied in special situations 
when chosing $W=R$; cf. Lemma~\ref{sufflem} in the next section.

\begin{theorem}\label{bigthm}
Assume that $\min\sigma_\ess(A)\leq 0$ and that the following condition holds:
\begin{itemize}
 \item [(I)] There exists an isomorphism $W$ in $L^2(\Omega)$ such that $RW$ is positive in $L^2(\Omega)$
and $W\cH_s\subset\cH_s$ holds for some $s\in (0,2]$. 
\end{itemize}
Then the nonreal spectrum of $T$ is bounded.
\end{theorem}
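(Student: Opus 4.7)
The plan is to interpret condition~(I) as an abstract criterion guaranteeing that the point $\infty$ is a \emph{regular critical point} of $T$ viewed as a selfadjoint operator in the Krein space $(L^2(\Omega),[\cdot,\cdot])$, and then to invoke the standard fact that regularity at $\infty$, together with $\rho(T)\neq\emptyset$, forces the nonreal spectrum to be contained in a compact set. The argument will be organized in three steps.

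First, I would verify that $\rho(T)\neq\emptyset$. This is not automatic on an unbounded domain because $A$ may have nontrivial essential spectrum at $0$, but the strategy of Lemma~\ref{rhot} still applies in a modified form: one removes the at most finite-dimensional obstruction at $0$ coming from isolated eigenvalues of $A$, produces a selfadjoint extension $\widetilde A$ with $0\in\rho(\widetilde A)$, forms $\widetilde T=R^{-1}\widetilde A$ which differs from $T$ by a finite-rank perturbation in resolvent sense, and uses \cite[Proposition~1.1]{CL89} (or \cite[Corollary~2.5]{ABT08}) to conclude $\rho(T)\neq\emptyset$. If $0\in\sigma_\ess(A)$ one reduces to a shift by some $\nu<\min\sigma(A)$ and applies the same reasoning to $A-\nu$ and $T-\nu R^{-1}$.

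Second, the heart of the proof is turning condition~(I) into a regularity statement. Since $RW$ is bounded and boundedly invertible on $L^2(\Omega)$ and positive, the form $\langle f,g\rangle:=(RWf,g)$ defines an equivalent Hilbert-space inner product on $L^2(\Omega)$, and $[Wf,g]=\langle f,g\rangle$ expresses this new inner product through the Krein inner product. The inclusion $W\cH_s\subset\cH_s$, combined with the closed graph theorem, shows that $W$ restricts to a bounded operator on the Hilbert space $\cH_s$. These two ingredients together with the semiboundedness of $A$ provide precisely the input required by the Curgus--Najman criterion in \cite{C85,CN93} (and used in the proof of \cite[Theorem~5.4]{BP10}): one constructs from $W$ a uniformly positive/negative fundamental decomposition of the form domain of $T$ by pushing forward, via $W$, the canonical splitting induced by the equivalent Hilbert-space inner product $\langle\cdot,\cdot\rangle$. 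This uniform angle between maximal positive and maximal negative subspaces in $\dom|T|^{1/2}$ is equivalent to $\infty$ being a regular critical point of $T$.

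Third, with $\rho(T)\neq\emptyset$ and regularity of the critical point $\infty$ in hand, the general Krein-space theory of \cite{L82,CN93} (see also \cite{BP10}) furnishes a bounded $[\cdot,\cdot]$-selfadjoint spectral projection $E(\dR\setminus[-N,N]^c)$ corresponding to a neighborhood of $\infty$ on the real axis; the complementary compact spectral set contains all nonreal spectrum of $T$, proving the claim. The main obstacle in executing this plan is the second step: condition~(I) is phrased entirely in terms of the Hilbert-space scale $\cH_s$ attached to $A$, whereas the property we need to extract is a uniform geometric splitting of the form domain of $T$, which a priori is sensitive to the indefinite weight $r$. Bridging this gap requires careful use of the identity $T=R^{-1}A$ and of the interplay between the two equivalent inner products $(\cdot,\cdot)$ and $\langle\cdot,\cdot\rangle$; it is here that the abstract Curgus--Najman framework does the essential work, and translating their hypotheses to the concrete condition~(I) is where one must be most careful.
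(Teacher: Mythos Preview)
Your plan has a genuine gap in Step~2, and it is precisely the step you flag as ``the main obstacle.'' You propose to show that $\infty$ is a regular critical point of $T$ itself and then invoke the Langer/\'Curgus--Najman machinery from \cite{L82,CN93}. But that machinery is available only for \emph{definitizable} operators, and under the hypothesis $\min\sigma_\ess(A)\leq 0$ the form $[T\cdot,\cdot]=(A\cdot,\cdot)$ can have infinitely many negative squares (indeed essential spectrum of $A$ may sit in $(-\infty,0]$). Hence $T$ need not be definitizable, it need not possess a spectral function in the sense of \cite{L82}, and the phrase ``regular critical point $\infty$ of $T$'' has no a~priori meaning. Condition~(I) is formulated in terms of the scale $\cH_s=\dom((A-\nu)^{s/2})$, and the \'Curgus--Najman criterion \cite[Theorem~2.1\,(iii)]{CN93} applies verbatim to the \emph{nonnegative} operator $T_\eta:=R^{-1}(A-\eta)$ for $\eta<\min\sigma(A)$, not to $T$.

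This is exactly why the paper proceeds differently: one first passes to $T_\eta$, which is nonnegative in $(L^2(\Omega),[\cdot,\cdot])$ and therefore definitizable; condition~(I) then yields regularity of $\infty$ for $T_\eta$ and hence a fundamental decomposition $L^2(\Omega)=E_+L^2(\Omega)[\dot+]E_-L^2(\Omega)$ with an associated equivalent Hilbert norm $\Vert\cdot\Vert_\sim$. The operator of interest is recovered as the bounded perturbation $T=T_\eta+V$ with $V=\eta R^{-1}$, and the remaining work is a hands-on $2\times 2$ block perturbation estimate (in the spirit of \cite[Proof of Theorem~5.4]{BP10}) showing that $\Vert(T-\mu)x\Vert_\sim\geq c(\mu)\Vert x\Vert_\sim$ for all nonreal $\mu$ with $\vert\mu\vert$ large. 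That estimate simultaneously gives $\rho(T)\neq\emptyset$, so your Step~1 is unnecessary; note also that your proposed reduction there, shifting to $T-\nu R^{-1}$, only shows $\rho(T-\nu R^{-1})\neq\emptyset$, which does not by itself imply $\rho(T)\neq\emptyset$. The missing idea in your outline is precisely this detour through the auxiliary nonnegative operator $T_\eta$ and the explicit perturbation argument back to $T$.
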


\begin{proof}
{\bf 1.} In this step of the proof we construct an indefinite elliptic operator $T_\eta$ which is a bounded 
perturbation of the indefinite elliptic operator $T$ and which induces (via its spectral decomposition) 
a new equivalent norm $\Vert\cdot\Vert_\sim$ on $L^2(\Omega)$.
 
For this fix some $\eta<\min\sigma(A)$ and consider the elliptic differential operator $A_\eta$ defined by
\begin{equation*}
A_\eta f:=(A-\eta)f=-\sum_{j,k=1}^n \frac{\partial}{\partial x_j} a_{jk} \frac{\partial f}{\partial x_k} +( a-\eta)f,
\quad f\in\dom A_\eta=\dom A.
\end{equation*}
Clearly $A_\eta$ is a positive selfadjoint operator in the Hilbert 
space $L^2(\Omega)$ and hence 
the indefinite elliptic operator 
\begin{equation*}
T_\eta f:=\frac{1}{r}
\left(-\sum_{j,k=1}^n \frac{\partial}{\partial x_j} a_{jk} \frac{\partial f}{\partial x_k} +(a-\eta)f\right),\quad f\in\dom T_\eta=\dom A_\eta,
\end{equation*}
is nonnegative in the Krein space $(L^2(\Omega),[\cdot,\cdot])$, the spectrum 
$\sigma(T_\eta)$ is a subset of $\dR$ and $0\in\rho(T_\eta)$; cf. Theorem~\ref{known1}.
Note that $T_\eta$ and $T$ are connected via
\begin{equation}\label{tetaaeta}
T_\eta=R^{-1} A_\eta= R^{-1} A- \eta R^{-1}=T-V,\qquad V:=\eta R^{-1},
\end{equation}
and that the perturbation term $V$ in \eqref{tetaaeta} is bounded.

By \cite{L65,L82} $T_\eta$ possesses a spectral function defined for all bounded subintervals of 
the real line. 
As a consequence of condition (I) and \cite[Theorem 2.1 (iii)]{CN93} (see also \cite{C85}) 
it follows that $\infty$ is
not a singular critical point of the operator $T_\eta$ and therefore also the spectral projections $E_+$ and $E_-$
corresponding to the intervals $(0,+\infty)$ and $(-\infty,0)$ exist. Moreover, as $T_\eta$ is nonnegative in $(L^2(\Omega),[\cdot,\cdot])$ 
the spectral subspaces $(E_\pm L^2(\Omega),\pm [\cdot,\cdot])$ are both Hilbert spaces
and $L^2(\Omega)$ can be decomposed in
\begin{equation}\label{decol2}
L^2(\Omega)=E_+ L^2(\Omega) [\dot +]E_- L^2(\Omega).
\end{equation}
We point out that the subspaces $E_\pm L^2(\Omega)$ differ from $L^2(\Omega_\pm)$ and that 
within this proof the subscripts $\pm$ are used in the sense of \eqref{decol2}.
From the properties of the spectral function it follows that
$T_\eta$ has diagonal form with respect to the space decomposition \eqref{decol2},
\begin{equation*}
T_\eta=\begin{pmatrix} T_{\eta,+} & 0 \\ 0 & T_{\eta_-}\end{pmatrix}, 
\end{equation*}
and  that the spectrum of $T_{\eta,\pm}$ is contained in $\dR^\pm$. The perturbation term
$V=\eta R^{-1}$ in \eqref{tetaaeta} admits the matrix representation 
\begin{equation*}
V=\begin{pmatrix} V_{11} & V_{12} \\ V_{21} & V_{22}\end{pmatrix}
\end{equation*}
with respect to the decomposition \eqref{decol2}.
Together with \eqref{tetaaeta}  we then have
\begin{equation}\label{tetav}
T=T_\eta+V=T_\eta+\begin{pmatrix} V_{11} & V_{12} \\ V_{21} & V_{22}\end{pmatrix}.
\end{equation}

In the following we write functions $x,y\in L^2(\Omega)$ in the form $x=x_++x_-$ and $y=y_++y_-$, where $x_\pm,y_\pm\in 
E_\pm L^2(\Omega)$; cf. \eqref{decol2}. 
We emphasize that $x_\pm$ are the components of $x$ with respect to the space decomposition \eqref{decol2}
and that $x_\pm$ do not coincide with  the restrictions of the function $x$ onto $\Omega_\pm$. 
Since the spectral subspaces $(E_\pm L^2(\Omega),\pm [\cdot,\cdot])$ are Hilbert spaces 
the inner product $(\cdot,\cdot)_\sim$ defined by
\begin{equation*}
(x,y)_\sim:=[x_+,y_+]-[x_-,y_-],\qquad x,y\in L^2(\Omega),
\end{equation*}
is positive definite. Furthermore, this scalar product is connected with the usual scalar product $(\cdot,\cdot)$ 
on $L^2(\Omega)$ via
\begin{equation*}
(x,y)_\sim=[E_+ x,E_+y]-[E_- x,E_- y]=[(E_+-E_-)x,y]=(R(E_+-E_-)x,y).
\end{equation*}
Therefore, as $R(E_+-E_-)$ is an isomorphism, the norms $\Vert\cdot\Vert$ and $\Vert\cdot\Vert_\sim$ 
induced by the scalar products $(\cdot,\cdot)$ and $(\cdot,\cdot)_\sim$, respectively, are equivalent. 
In particular, with $\nu:=\Vert R(E_+-E_-)\Vert^{-1}$
we have 
\begin{equation}\label{cvbn}
\Vert x\Vert\leq\sqrt{\nu}\Vert x\Vert_\sim\qquad\text{for all}\,\,\, x\in L^2(\Omega).
\end{equation}

\vskip 0.2cm\noindent
{\bf 2.} In this step it will be shown that for sufficiently large $\vert\mu\vert$, $\mu\in\dC\backslash\dR$ with $\Re\mu\leq0$ the operator
$T_{\eta,+}+ V_{11} -\mu$ is invertible and that the estimates 
\begin{equation}\label{normest}
\Vert (T_{\eta,+}+ V_{11} -\mu)^{-1}\Vert_\sim<\frac{1}{2}\quad\text{and}\quad
\Vert (T_{\eta,+}+ V_{11} -\mu)^{-1}V_{12}\Vert_\sim<\frac{1}{2}
\end{equation}
hold. By replacing $V_{11}$ and $V_{12}$ in the reasoning below with $V_{22}$ and $V_{21}$, respectively, it follows
that for sufficiently large $\vert\mu\vert$, $\mu\in\dC\backslash\dR$ with $\Re\mu\geq0$ the operator
$T_{\eta,-}+ V_{22} -\mu$ is invertible and that the estimates 
\begin{equation*}
\Vert (T_{\eta,-}+ V_{22} -\mu)^{-1}\Vert_\sim<\frac{1}{2}\quad\text{and}\quad
\Vert (T_{\eta,-}+ V_{22} -\mu)^{-1}V_{21}\Vert_\sim<\frac{1}{2}
\end{equation*}
are valid.

In the following we assume that the entry $V_{12}$ in the perturbation term $V$ is nonzero (otherwise 
the first estimate in \eqref{normest} follows with $\delta>2$ and $\tau=\delta+\Vert V_{11}\Vert_\sim$ in the
argument below; the second estimate is trivial). Choose $\delta>0$ such that
\begin{equation}\label{deltacond}
\delta+\Vert V_{11}\Vert_\sim>\max\left\{\frac{2+\Vert V_{11}\Vert_\sim}{\Vert  V_{12}\Vert_\sim} ,
2+\frac{\Vert  V_{11}\Vert_\sim}{\Vert V_{12}\Vert_\sim}\right\}
\end{equation}
and define the constant $\tau$ by
\begin{equation}\label{tau}
\tau:=\bigl(\delta+\Vert V_{11}\Vert_\sim\bigr)\Vert V_{12}\Vert_\sim
\end{equation}
Let $\mu\in\dC\backslash\dR$ with $\Re\mu\leq 0$ and $\vert\mu\vert>\tau$.
Since $\sigma(T_{\eta,+})\subset\dR^+$ it is clear that 
\begin{equation*}
 \text{dist}\,(\mu,\sigma(T_{\eta_+}))>\tau
\end{equation*}
holds and therefore we have $\Vert (T_{\eta,+}-\mu)^{-1}\Vert_\sim <\tau^{-1}$.
This implies 
\begin{equation*}
\Vert V_{11} (T_{\eta,+}-\mu)^{-1}\Vert_\sim <\frac{1}{\tau} \Vert V_{11} \Vert_\sim
\end{equation*}
and it follows from \eqref{deltacond} and \eqref{tau} that $\Vert V_{11} (T_{\eta,+}-\mu)^{-1}\Vert_\sim <1$.
Therefore the operator $I+ V_{11} (T_{\eta,+}-\mu)^{-1}$ is boundedly invertible and the norm of the inverse 
can be estimated by
\begin{equation*}
\Vert (I+ V_{11} (T_{\eta,+}-\mu)^{-1})^{-1}\Vert_\sim <
\left(1-\frac{1}{\tau}\Vert V_{11} \Vert_\sim\right)^{-1}
\end{equation*}
It follows that also the operator
\begin{equation*}
T_{\eta,+}+ V_{11} -\mu=\bigl(I+V_{11}(T_{\eta,+}-\mu)^{-1}\bigr)(T_{\eta,+}-\mu)
\end{equation*}
is boundedly invertible and we conclude
\begin{equation}\label{norm0}
\Vert (T_{\eta,+}+ V_{11} -\mu)^{-1}\Vert_\sim<\frac{1}{\tau}\left(1-\frac{1}{\tau}\Vert V_{11} \Vert_\sim\right)^{-1}
=\frac{1}{\tau-\Vert V_{11} \Vert_\sim}.
\end{equation}
Since by \eqref{deltacond} and \eqref{tau} $\tau-\Vert V_{11} \Vert_\sim>2$ we obtain the first estimate in \eqref{normest}.
Furthermore, as a consequence of \eqref{deltacond} and \eqref{tau} we have
\begin{equation*}
\frac{\Vert V_{12} \Vert_\sim}{\tau-\Vert V_{11} \Vert_\sim}=
\frac{\Vert V_{12} \Vert_\sim}{(\delta+\Vert V_{11}\Vert_\sim)
\Vert V_{12} \Vert_\sim-\Vert V_{11}\Vert_\sim}<\frac{1}{2}
\end{equation*}
and therefore \eqref{norm0} yields the second estimate in \eqref{normest},
\begin{equation*}
\Vert (T_{\eta,+}+ V_{11} -\mu)^{-1} V_{12} \Vert_\sim<
\frac{\Vert V_{12} \Vert_\sim}{\tau-\Vert V_{11} \Vert_\sim}<\frac{1}{2}.
\end{equation*}

\vskip 0.2cm\noindent
{\bf 3.} Next we verify the inequality 
\begin{equation}\label{goodinq}
\Vert (T-\mu)x\Vert_\sim\geq \left(
\sqrt{\left(1+\frac{\nu}{\vert\Im\mu\vert}\right)^2+1}-\left(1+\frac{\nu}{\vert\Im\mu\vert}\right)\,\right) \Vert x\Vert_\sim
\end{equation}
for $x\in\dom T$ and all sufficiently large $\vert\mu\vert$, $\mu\in\dC\backslash\dR$.
Observe first that for $x\in \dom T$  we have
\begin{equation*}
[(T-\mu)x,x]=[(T-\Re\mu)x,x]-i\Im\mu[x,x]
\end{equation*}
which together with \eqref{cvbn} implies
\begin{equation*}
\vert \Im\mu\vert \,\vert[ x,x]\vert\leq\vert[(T-\mu)x,x]\vert
\leq \Vert (T-\mu)x\Vert\Vert x\Vert\leq \nu\Vert (T-\mu)x\Vert_\sim\Vert x\Vert_\sim
\end{equation*}
and hence
\begin{equation}\label{est1}
 \frac{\nu}{\vert\Im\mu\vert} \Vert (T-\mu)x\Vert_\sim\Vert x\Vert_\sim \geq \pm [x,x].
\end{equation}

On the other hand, when we consider the equation $(T-\mu)x=y$ with $x=x_++x_-$, $y=y_++y_-$, 
$x_\pm,y_\pm\in E_\pm L^2(\Omega)$, that is,
\begin{equation*}
\begin{split}
(T_{\eta,+}+V_{11}-\mu)x_++V_{12}x_-&=y_+\\
V_{21}x_++(T_{\eta,-}+V_{22}-\mu)x_-&=y_-
\end{split}
\end{equation*}
(see \eqref{tetav}),
then we conclude with the help of the estimates from step 2 that for sufficiently large $\vert\mu\vert$, 
$\mu\in\dC\backslash\dR$ with $\Re\mu\leq 0$
\begin{equation*}
\begin{split}
\Vert x_+\Vert_\sim&\leq \Vert (T_{\eta_+}+V_{11}-\mu)^{-1}y_+ \Vert_\sim+ 
\Vert (T_{\eta_+}+ V_{11}-\mu)^{-1} V_{12}x_- \Vert_\sim\\
&\leq \frac{1}{2}\Vert y_+\Vert_\sim +\frac{1}{2} \Vert x_-\Vert_\sim
\leq \frac{1}{2}\Vert y_+\Vert_\sim +\frac{1}{2}\Vert x\Vert_\sim
\end{split}
\end{equation*}
holds and that for sufficiently large $\vert\mu\vert$, $\mu\in\dC\backslash\dR$
with $\Re\mu\geq 0$
\begin{equation*}
\begin{split}
\Vert x_-\Vert_\sim&\leq \Vert (T_{\eta_-}+V_{22}-\mu)^{-1}y_- \Vert_\sim+ 
\Vert (T_{\eta_-}+ V_{22}-\mu)^{-1} V_{21}x_+ \Vert_\sim\\
&\leq \frac{1}{2}\Vert y_-\Vert_\sim +\frac{1}{2} \Vert x_+\Vert_\sim
\leq \frac{1}{2}\Vert y_-\Vert_\sim +\frac{1}{2}\Vert x\Vert_\sim
\end{split}
\end{equation*}
holds. Since $\Vert y_\pm\Vert_\sim\leq\Vert y\Vert_\sim=\Vert(T-\mu)x\Vert_\sim$ we have 
\begin{equation}\label{esty}
\Vert x_\pm\Vert_\sim^2\leq \frac{1}{4}\Vert (T-\mu)x\Vert_\sim^2 +\frac{1}{4}\Vert x\Vert_\sim^2
+\frac{1}{2}\Vert (T-\mu)x\Vert_\sim \Vert x\Vert_\sim
\end{equation}
for sufficiently large $\vert\mu\vert$, $\mu\in\dC\backslash\dR$ with $\Re\mu\leq 0$ and $\Re\mu\geq 0$, respectively.
From $\Vert x_+\Vert_\sim^2+\Vert x_-\Vert_\sim^2=\Vert x\Vert_\sim^2$ we obtain
\begin{equation*}
\pm[x,x]=\pm\Vert x_+\Vert_\sim^2\mp\Vert x_-\Vert_\sim^2=\Vert x\Vert_\sim^2- 2 \Vert x_\mp\Vert_\sim^2
\end{equation*}
and together with \eqref{esty} we conclude
\begin{equation*}
\pm [x,x]\geq \frac{1}{2}\Vert x\Vert_\sim^2-\frac{1}{2}\Vert (T-\mu)x\Vert_\sim^2 -\Vert (T-\mu)x \Vert_\sim
\Vert x\Vert_\sim
\end{equation*}
for sufficiently large $\vert\mu\vert$, $\mu\in\dC\backslash\dR$ with $\Re\mu\geq 0$ and $\Re\mu\leq 0$, respectively.
Together with \eqref{est1} this leads to
\begin{equation*}
\frac{\nu}{\vert \Im\mu\vert}\Vert (T-\mu)x\Vert_\sim\Vert x\Vert_\sim\geq\frac{1}{2}\Vert x\Vert_\sim^2-\frac{1}{2}\Vert (T-\mu)x\Vert_\sim^2 -\Vert (T-\mu)x \Vert_\sim\Vert x\Vert_\sim,
\end{equation*}
for all sufficiently large $\vert\mu\vert$, $\mu\in\dC\backslash\dR$.
In other words, $\Vert (T-\mu)x\Vert_\sim$ satisfies the quadratic inequality
\begin{equation*}
\Vert (T-\mu)x\Vert_\sim^2+2\left(1+\frac{\nu}{\vert\Im\mu\vert}\right)\Vert x\Vert_\sim\Vert (T-\mu)x\Vert_\sim-\Vert x\Vert_\sim^2
\geq 0.
\end{equation*}
Hence it follows that \eqref{goodinq}
holds for all $x\in\dom T$ and all $\mu\in\dC\backslash\dR$ with $\vert\mu\vert$ sufficiently large.

\vskip 0.2cm\noindent
{\bf 4.} Let $\lambda\in\dC\backslash\dR$ such that \eqref{goodinq} is satisfied with $\mu=\lambda$ and $\mu=\bar\lambda$.
Then we have $\ker(T-\lambda)=\{0\}$ and $\ran(T-\lambda)$ is closed as $T$ is closed. 
Furthermore, since $T$ is selfadjoint 
in the Krein space $(L^2(\Omega),[\cdot,\cdot])$ it is clear that
$\ran(T-\lambda)^{[\bot]}=\ker(T-\bar\lambda)$
holds. As $\Vert(T-\bar\lambda)x\Vert_\sim$, $x\in\dom T$, satisfies the same estimate as $\Vert(T-\lambda)x\Vert_\sim$ in \eqref{goodinq} this 
implies that 
also $\ker(T-\bar\lambda)$ is trivial. Therefore $T-\lambda$ is bijective, i.e., $\lambda\in\rho(T)$. Since this is true for every $\lambda=\mu$ which satisfies 
\eqref{goodinq} we conclude that the nonreal spectrum of $T$ is bounded.
\end{proof}

\section{Spectral properties of indefinite elliptic operators on $\dR^n$}\label{omegar}

In this section we consider the case $\Omega=\dR^n$ and we assume that the subsets $\Omega_\pm=\{x\in\dR^n:\pm r(x)>0\}$
consist of finitely many connected components with compact smooth boundaries. In particular, this 
implies that one of the sets $\Omega_\pm$ is bounded and one is unbounded, and that the boundaries $\partial\Omega_+$ and
$\partial\Omega_-$ coincide. Here and in the following we discuss the case
where $\Omega_-$ is unbounded and $\Omega_+$ is bounded and we denote the boundary $\partial\Omega_\pm$ by $\cC$. 
The simple modifications of the results below to 
the other case are left to the reader. Since the weight function satisfies $r,r^{-1}\in L^\infty(\dR^n)$ the restrictions 
$r_\pm,r_\pm^{-1}$ belong to $L^\infty(\Omega_\pm)$ and hence the multiplication operators 
$R_\pm f_\pm=r_\pm f_\pm$ are isomorphisms in $L^2(\Omega_\pm)$ with inverses $R_\pm^{-1} f_\pm=r_\pm^{-1} f_\pm$,
$f_\pm\in L^2(\Omega_\pm)$.

Let us now assume that the coefficients $a_{jk}\in C^\infty(\dR^n)$ in \eqref{ellell} and their derivatives 
are uniformly continuous and bounded, and that (as before) $a\in L^\infty(\dR^n)$ is real valued.
An essential ingredient for the following considerations is that
by elliptic regularity and interpolation 
\begin{equation*}
\dom A=\dom T=H^2(\dR^n)\qquad\text{and}\qquad \cH_s=H^s(\dR^n),\qquad s\in [0,2],
\end{equation*}
holds; cf. \cite{A97,B65,LM72,W87}, \cite[Condition 3.1]{M10} and \eqref{a}, \eqref{t}. Here $H^s(\dR^n)$ is the Sobolev space or order $s$.
The spaces consisting of restrictions of functions from $H^s(\dR^n)$ onto 
$\Omega_\pm$ are denoted by $H^s(\Omega_\pm)$.
In the next lemma and remark we give simple sufficient conditions for the weight function $r$ such that
condition (I) in Theorem~\ref{bigthm} holds.

\begin{lemma}\label{sufflem}
Assume that for some $s\in(0,\frac{1}{2})$ the spaces $H^s(\Omega_+)$ and $H^s(\Omega_-)$ are invariant subspaces
of the multiplication operators $R_+$ and $R_-$, respectively. Then $H^s(\dR^n)$ is an invariant
subspace of the multiplication operator $R$, and condition {\rm (I)} in Theorem~\ref{bigthm}
is satisfied with $W$ replaced by $R$.
\end{lemma}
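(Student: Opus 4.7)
The plan is to verify each of the three requirements of condition~(I) of Theorem~\ref{bigthm} with the choice $W:=R$. Two of them are essentially already available: $R$ is an isomorphism of $L^2(\dR^n)$ by the discussion in Section~\ref{222}, with inverse given by multiplication by $r^{-1}$; and $RW=R^2$ is multiplication by $r^2$, which is bounded, selfadjoint and strictly positive since $r^{-1}\in L^\infty(\dR^n)$ forces $r^2\geq \Vert r^{-1}\Vert_\infty^{-2}>0$ almost everywhere, so that $(R^2 f,f)=\int_{\dR^n} r^2|f|^2\,dx>0$ for every nonzero $f\in L^2(\dR^n)$.

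The substantive step is to show the invariance $RH^s(\dR^n)\subset H^s(\dR^n)$. My approach is to localize the problem to $\Omega_+$ and $\Omega_-$ and feed in the hypothesis on $R_\pm$. The key tool, and the only place where the strict inequality $s<\tfrac{1}{2}$ is used, is the classical identification $H^s(\Omega_\pm)=\widetilde H^s(\Omega_\pm)$ valid for open sets with smooth (here compact) boundary in the range $0<s<\tfrac{1}{2}$ (see, e.g., Lions--Magenes or Triebel). Equivalently, in this range of $s$ the multiplication by $\chi_{\Omega_\pm}$ is a bounded operator on $H^s(\dR^n)$, and extension by zero $e_\pm\colon H^s(\Omega_\pm)\to H^s(\dR^n)$ is bounded.

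Granting this, the argument proceeds as follows. Given $f\in H^s(\dR^n)$, I would take its restrictions $f_\pm\in H^s(\Omega_\pm)$; the hypothesis then yields $R_\pm f_\pm\in H^s(\Omega_\pm)$; extension by zero produces $e_\pm R_\pm f_\pm\in H^s(\dR^n)$; and a pointwise check shows
\begin{equation*}
Rf \;=\; e_+ R_+ f_+ \;+\; e_- R_- f_- \qquad \text{in } L^2(\dR^n),
\end{equation*}
up to a Lebesgue null set on $\cC=\partial\Omega_\pm$. Hence $Rf\in H^s(\dR^n)$, which establishes the required invariance and completes the verification of~(I).

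The only genuine obstacle I anticipate is the careful invocation of $H^s(\Omega_\pm)=\widetilde H^s(\Omega_\pm)$ for $s<\tfrac{1}{2}$, which is what bridges the hypothesis (formulated on the subdomains) with the conclusion (formulated on $\dR^n$) and exploits both the smoothness of the interface $\cC$ and the strict upper bound on $s$. Once this identification is in hand, the rest of the proof is formal bookkeeping, and the remark following the lemma could then record simple pointwise conditions on $r$ (such as $r_\pm \in C^{0,s+\varepsilon}(\overline{\Omega_\pm})$) under which the $R_\pm$-invariance hypothesis is automatic.
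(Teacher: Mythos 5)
Your proposal is correct and follows essentially the same route as the paper: restrict $f$ to $\Omega_\pm$, apply the invariance hypothesis for $R_\pm$, and use that extension by zero across the smooth compact interface is bounded on $H^s$ precisely because $s<\tfrac{1}{2}$ (the paper cites Grisvard, Theorem~1.4.4.4 and Corollary~1.4.4.5, where you invoke the equivalent identification $H^s(\Omega_\pm)=\widetilde H^s(\Omega_\pm)$), together with the routine observations that $R$ is an isomorphism and $R^2\geq\essinf r^2>0$.
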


\begin{proof}
Let $s$ be as in the assumptions of the lemma and let $f\in H^s(\dR^n)$. Then
the restrictions $f_\pm$ of $f$ onto $\Omega_\pm$ are functions in $H^s(\Omega_\pm)$ 
and therefore, by assumption, the functions $g_\pm:=r_\pm f_\pm$ also belong to $H^s(\Omega_\pm)$. 
As $0<s<\frac{1}{2}$, the continuations $\widetilde g_\pm$ of $g_\pm$ by zero 
onto $\dR^n$ both are in $H^s(\dR^n)$; cf. \cite[Theorem~1.4.4.4 and Corollary~1.4.4.5]{G85}
and note that the proofs of these statements in \cite{G85} 
also cover the case of an unbounded domain with a compact smooth boundary.
Therefore $Rf=rf=\widetilde g_+ +\widetilde g_-\in H^s(\dR^n)$ and hence $H^s(\dR^n)$ is 
invariant for $R$. Furthermore, $R$ is an isomorphism in $L^2(\dR^n)$ 
and the estimate $R^2\geq \text{essinf}\, r^2 >0$ holds, i.e.,
$R$ possesses all the properties of the operator $W$ in condition (I) in Theorem~\ref{bigthm}.
\end{proof}

\begin{remark}\label{suffrem}
If, e.g., the function $r$ is equal to a (negative) constant outside some bounded
subset of $\dR^n$ and belongs to the H\"{o}lder space $C^{0,\alpha}(\dR^n)$ for some $\alpha>0$,
then it follows from
\cite[Theorem 1.4.1.1]{G85} and a similar argument as in the proof of Lemma~\ref{sufflem}
that $H^s(\Omega_\pm)$, $s\in(0,\alpha)$, are invariant subspaces of $R_\pm$.
\end{remark}

For completeness we state the following immediate consequence of Theorem~\ref{bigthm}
and Lemma~\ref{sufflem}.

\begin{corollary}\label{suffcor}
Assume that $\min\sigma_\ess(A)\leq 0$ and $R_\pm (H^s(\Omega_\pm))\subset H^s(\Omega_\pm)$ holds for some $s\in(0,\frac{1}{2})$.
Then the nonreal spectrum of $T$ is bounded.
\end{corollary}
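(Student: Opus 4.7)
The statement is flagged as an ``immediate consequence'' of Theorem~\ref{bigthm} and Lemma~\ref{sufflem}, so the plan is essentially just to verify that the two hypotheses of Theorem~\ref{bigthm} are met under the assumptions of the corollary; no genuine new work is required.

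First I would recall the identification made at the beginning of Section~\ref{omegar}: because the coefficients of $\ell$ are smooth, uniformly continuous and bounded on $\dR^n$, elliptic regularity together with interpolation yields $\cH_s=H^s(\dR^n)$ for $s\in[0,2]$. Thus the abstract scale of form domains used in Theorem~\ref{bigthm} literally coincides with the Sobolev scale appearing in the hypothesis.

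Next I would verify condition (I) of Theorem~\ref{bigthm} with $W=R$. The hypothesis $R_\pm(H^s(\Omega_\pm))\subset H^s(\Omega_\pm)$ for some $s\in(0,\tfrac{1}{2})$ is exactly the assumption of Lemma~\ref{sufflem}; applying that lemma, $R$ leaves $H^s(\dR^n)=\cH_s$ invariant, $R$ is an isomorphism in $L^2(\dR^n)$, and $RW=R^2\ge\essinf r^2>0$, so $RW$ is positive. Since $s\in(0,\tfrac{1}{2})\subset(0,2]$, condition (I) is satisfied.

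Finally, combining this with the remaining assumption $\min\sigma_\ess(A)\le 0$, Theorem~\ref{bigthm} applies verbatim and yields the boundedness of the nonreal spectrum of $T$. There is no real obstacle in the argument; the only point to be careful about is checking that the exponent range $s\in(0,\tfrac{1}{2})$ imposed in the corollary (which comes from the extension-by-zero argument inside Lemma~\ref{sufflem}) is compatible with the exponent range $s\in(0,2]$ required by condition~(I), which it plainly is.
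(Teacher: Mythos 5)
Your proposal is correct and follows exactly the route the paper intends: the corollary is stated there without proof precisely because it amounts to invoking Lemma~\ref{sufflem} (which already asserts that condition (I) holds with $W=R$, using $\cH_s=H^s(\dR^n)$) and then applying Theorem~\ref{bigthm} under the remaining hypothesis $\min\sigma_\ess(A)\leq 0$. Your check that $s\in(0,\tfrac{1}{2})$ sits inside the range $s\in(0,2]$ required by condition (I) is the only point needing attention, and you handle it correctly.
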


In the following theorem we obtain more precise statements on the 
qualitative spectral properties of $T$. The proof is based on a multidimensional
variant of Glazmanns decomposition method from the theory of ordinary differential operators,
see, e.g., \cite{AG81,BT07,CL89,EE,N68}.

\begin{theorem}\label{rnthm}
Assume that $\min\sigma_\ess(A)\leq 0$. If $\rho(T)\not=\emptyset$, then the essential spectrum of $T$ is real, bounded from above, and 
$\sigma_\ess(T)\cap [0,\infty)\not=\emptyset$ holds. Moreover,
the nonreal spectrum of $T$ consists of normal eigenvalues which are symmetric
with respect to the real line and which may accumulate to points in $\sigma_\ess(T)$. 

If, in particular, $R_\pm (H^s(\Omega_\pm))\subset H^s(\Omega_\pm)$ holds for some $s\in(0,\frac{1}{2})$, then the assumption $\rho(T)\not=\emptyset$
is satisfied, the above assertions hold and the nonreal spectrum of $T$ is bounded. 
\end{theorem}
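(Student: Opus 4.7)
The natural strategy is a multidimensional Glazman decomposition along the compact smooth interface $\cC:=\partial\Omega_+=\partial\Omega_-$. Let $A_{D,\pm}$ denote the selfadjoint Dirichlet realizations of $\ell$ on $\Omega_\pm$, set $A_D:=A_{D,+}\oplus A_{D,-}$ in $L^2(\dR^n)=L^2(\Omega_+)\oplus L^2(\Omega_-)$, and define the companion $T_D:=R^{-1}A_D=T_{D,+}\oplus T_{D,-}$ with $T_{D,\pm}:=R_\pm^{-1}A_{D,\pm}$. Since $r_+>0$ on the bounded domain $\Omega_+$, the component $T_{D,+}$ is unitarily equivalent to a selfadjoint operator with compact resolvent on $L^2(\Omega_+,r_+dx)$, so $\sigma_\ess(T_{D,+})=\emptyset$, while $r_-<0$ implies that $-T_{D,-}=|r_-|^{-1}A_{D,-}$ is selfadjoint on $L^2(\Omega_-,|r_-|dx)$ and bounded from below by the ellipticity of $\ell$ and $a\in L^\infty$; in particular $\sigma(T_{D,-})\subset\dR$ is bounded from above.

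The next step is to reduce $\sigma_\ess(T)$ to $\sigma_\ess(T_D)$ via the identity
\begin{equation*}
(T-\mu)^{-1}-(T_D-\mu)^{-1}=\bigl((A-\mu R)^{-1}-(A_D-\mu R)^{-1}\bigr)R,\qquad\mu\in\rho(T)\cap\rho(T_D),
\end{equation*}
which is immediate from the factorisations $T-\mu=R^{-1}(A-\mu R)$ and $T_D-\mu=R^{-1}(A_D-\mu R)$. Here $\rho(T)\neq\emptyset$ is the standing assumption and $\rho(T_D)\supset\dC\setminus\dR$, so a common point exists. Since $A-\mu R$ and $A_D-\mu R$ are uniformly elliptic on $\dR^n$ and differ only by a Dirichlet condition on the compact smooth hypersurface $\cC$, classical trace theory and Krein-type boundary formulas yield that the bracketed resolvent difference is compact on $L^2(\dR^n)$; stability of the essential spectrum under compact perturbations then gives $\sigma_\ess(T)=\sigma_\ess(T_D)=\sigma_\ess(T_{D,-})$, which is real and bounded from above. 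For the claim $\sigma_\ess(T)\cap[0,\infty)\neq\emptyset$ I would use Persson's formula for the selfadjoint operator $-T_{D,-}$ on $L^2(\Omega_-,|r_-|dx)$: applying the same decomposition to $A$ gives $\min\sigma_\ess(A_{D,-})=\min\sigma_\ess(A)\leq 0$, so on the complement of every compact subset of $\Omega_-$ there are test functions $u\in C_c^\infty$ with arbitrarily small $\langle A_{D,-}u,u\rangle_{L^2}/\|u\|_{L^2}^2$; since $|r_-|$ is bounded above and below by positive constants, the Rayleigh quotient of $-T_{D,-}$ is correspondingly arbitrarily small, so Persson's formula forces $\min\sigma_\ess(-T_{D,-})\leq 0$, i.e.\ $\max\sigma_\ess(T_{D,-})\geq 0$.

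The statement about the nonreal spectrum is then routine Krein-space analysis. Every $\lambda\in\sigma(T)\setminus\dR$ lies outside the real set $\sigma_\ess(T)$, so $T-\lambda$ is Fredholm; isolatedness and finite multiplicity, hence normality, follow from Krein-selfadjointness together with $\rho(T)\neq\emptyset$, and the symmetry $\sigma(T)=\overline{\sigma(T)}$ is immediate from $T^+=T$, so any accumulation point of nonreal eigenvalues must lie in $\sigma_\ess(T)$. For the final assertion, the hypothesis $R_\pm(H^s(\Omega_\pm))\subset H^s(\Omega_\pm)$ allows one to invoke Corollary~\ref{suffcor}, which yields boundedness of the nonreal spectrum and hence in particular $\rho(T)\neq\emptyset$, activating the preceding assertions. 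The main technical obstacle I expect is the compactness of $(A-\mu R)^{-1}-(A_D-\mu R)^{-1}$: this must be established using $\dom A=H^2(\dR^n)$ (stated at the start of Section~\ref{omegar}), the compactness of the trace map from $H^2(\dR^n)$ to fractional Sobolev spaces on $\cC$, and a Krein-type resolvent formula expressing the boundary perturbation as a composition of bounded and compact boundary operators.
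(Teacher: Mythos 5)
Your proposal is correct and follows essentially the same route as the paper's proof: a Glazman decomposition along the compact smooth interface $\cC$ into the Dirichlet operators $A_\pm$ and the weighted operators $B_\pm=R_\pm^{-1}A_\pm$ (your $T_{D,\pm}$), compactness of the decoupling resolvent differences (there via Birman-type/Krein-formula arguments, as in Theorem~\ref{resformel}), the bounds on $r_-$ and the lower bound of $A_-$ to get $\sigma_\ess(T)=\sigma_\ess(B_-)$ real and bounded from above, Fredholm theory for the nonreal normal eigenvalues accumulating only to $\sigma_\ess(T)$, and Corollary~\ref{suffcor} for the final assertion including $\rho(T)\not=\emptyset$. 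The only notable deviation is your use of a weighted version of Persson's formula to obtain $\sigma_\ess(T)\cap[0,\infty)\not=\emptyset$, where the paper argues by contradiction from the factorization $A_-=(-R_-)(-B_-)$; both arguments are sound.
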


\begin{proof}
Besides the operators $A$ and $T$ we will make use of the selfadjoint elliptic differential operators
\begin{equation}\label{apm}
A_\pm f_\pm:=\ell(f_\pm),\qquad \dom A_\pm=H^2(\Omega_\pm)\cap H^1_0(\Omega_\pm),
\end{equation}
in $L^2(\Omega_\pm)$ and the weighted differential operators 
\begin{equation}\label{bpm}
B_\pm f_\pm:=\cL_\pm (f_\pm)=\frac{1}{r_\pm}\ell(f_\pm),\qquad \dom B_\pm=H^2(\Omega_\pm)\cap H^1_0(\Omega_\pm),
\end{equation}
which are selfadjoint in the weighted $L^2$-space $L^2(\Omega_\pm,\pm r_\pm)$, where the 
(positive definite)
scalar products $\langle\cdot,\cdot\rangle_\pm$ are defined by
\begin{equation}\label{asdfg}
\langle f_\pm,g_\pm\rangle_\pm:=
\int_{\Omega_\pm} f_\pm(x)\overline{g_\pm(x)}\,(\pm r_\pm(x))\,dx,\qquad f_\pm,g_\pm\in L^2(\Omega_\pm).
\end{equation}
Observe that the orthogonal sums $A_+\oplus A_-$ and $B_+\oplus B_-$ are selfadjoint
operators in $L^2(\Omega)$ and $L^2(\Omega,r)$, respectively. Furthermore, 
since the boundary $\cC$ is compact and smooth it can be shown that the 
resolvent differences
\begin{equation}\label{aaa}
(A-\lambda)^{-1}-\bigl((A_+\oplus A_-)-\lambda\bigr)^{-1},\qquad\lambda\in\rho(A)\cap\rho(A_+\oplus A_-),
\end{equation}
and
\begin{equation}\label{ttt}
(T-\lambda)^{-1}-\bigl((B_+\oplus B_-)-\lambda\bigr)^{-1},\qquad\lambda\in\rho(T)\cap\rho(B_+\oplus B_-),
\end{equation}
are compact operators in $L^2(\dR^n)$; cf. \cite{B62} and Theorem~\ref{resformel} below.

Recall that the spectra of $A_\pm$ are bounded from below and, moreover, 
as $\Omega_+$ is assumed to be bounded, the spectrum of $A_+$ consists of normal eigenvalues;
cf. Theorems~\ref{speca} and \ref{speca2}.
Furthermore, the differential operators in \eqref{apm} and \eqref{bpm} are connected via
\begin{equation}\label{bapm}
B_\pm=R_\pm^{-1}A_\pm,
\end{equation}
where $R_\pm$ are the multiplication operators with the functions $r_\pm$.
One verifies that for $\lambda\in
\rho(A_+)\cap\rho(B_+)$ the resolvents of $A_+$ and $B_+$
are connected via
\begin{equation*}
 (B_+-\lambda)^{-1}=(A_+-\lambda)^{-1}R_+-\lambda(A_+-\lambda)^{-1}(I-R_+)(B_+-\lambda)^{-1}
\end{equation*}
and since $(A_+-\lambda)^{-1}$ is compact the same holds for the resolvent of $B_+$;
cf. the proof of Theorem~\ref{boundedthm}. Thus the spectrum of
$B_+$ is also bounded from below and consists of normal eigenvalues which accumulate to $+\infty$. 

Next the spectrum of $B_-$ will be described in terms of the spectrum of $A_-$. 
Since the resolvent difference in \eqref{aaa} is compact and $\sigma_\ess(A_+)=\emptyset$
we conclude $$\sigma_\ess(A)=\sigma_\ess(A_+\oplus A_-)=\sigma_\ess (A_-).$$ 
Furthermore, the assumption $\min\sigma_\ess(A)\leq 0$ implies that
$A_-$ is bounded
from below with negative lower bound 
\begin{equation}\label{neglow}
\nu:=\min\sigma(A_-)\leq\min\sigma_\ess(A_-)\leq 0.
\end{equation}
Denote the usual scalar product in $L^2(\Omega_-)$ by $(\cdot,\cdot)_-$ and
let $\gamma$ be the supremum of the weight function $r_-$ on $\Omega_-$. Then 
we have $\gamma<0$ and $(-r_-(x))^{-1}\leq (-\gamma)^{-1}$ for all $x\in\Omega_-$.
Moreover, from the estimate
\begin{equation*}
(f_-,f_-)_-=\int_{\Omega_-}\frac{1}{-r_-(x)} \vert f_-(x)\vert^2\,(-r_-(x))\,dx\leq \frac{1}{-\gamma}\langle f_-,f_-\rangle_-,\quad f\in L^2(\Omega_-),
\end{equation*}
we obtain together with \eqref{asdfg} and \eqref{bapm} that
\begin{equation*}
\langle B_-f_-,f_-\rangle_-=(-A_-f_-,f_-)_-\leq -\nu(f_-,f_-)_-
\leq \frac{\nu}{\gamma}\langle f_-,f_-\rangle_-
\end{equation*}
holds for all $f_-\in\dom B_-$, i.e., the spectrum $\sigma(B_-)$ and the essential spectrum $\sigma_\ess(B_-)$ are 
bounded from above by the positive constant $\tfrac{\nu}{\gamma}$. 
Observe that $\max\sigma_\ess(B_-)\geq 0$ holds, since otherwise $\min\sigma_\ess(-B_-)$ is positive
and $A_-=(-R_-)(-B_-)$ implies that also $\min\sigma_\ess(A_-)$ is positive which contradicts \eqref{neglow}.

Summing up we have shown that the essential spectrum of $B_+\oplus B_-$ is real, bounded
from above and $\sigma_\ess(B_+\oplus B_-)\cap [0,\infty)\not=\emptyset$. 
Since the resolvent difference \eqref{ttt} is compact we obtain $\sigma_\ess(T)=\sigma_\ess(B_+\oplus B_-)=\sigma_\ess(B_-)$ which together with 
Corollary~\ref{suffcor} yields the statements.
\end{proof}


In the following we will show that the nonreal eigenvalues of $T$ and the corresponding eigenspaces can be
characterized with the help of Dirichlet-to-Neumann maps associated to the restrictions of the
elliptic differential expression $\cL$ on $\Omega_\pm$. For this, recall first that the mapping 
$C^{\infty}(\overline\Omega_\pm)\ni f_\pm\mapsto\{f_\pm\vert_{\cC},\tfrac{\partial f_\pm}{\partial\nu_\pm}\vert_\cC\}$
extends to a continuous surjective mapping
\begin{equation}\label{trace}
H^2(\Omega_\pm)\ni f_\pm \mapsto\left\{f_\pm\vert_{\cC},\frac{\partial f_\pm}{\partial\nu_\pm}\Bigl|_\cC\right\}
\in H^{3/2}(\cC)\times H^{1/2}(\cC),
\end{equation}
where $\frac{\partial f_\pm}{\partial\nu_\pm}\bigl|_\cC:=\sum_{j,k=1}^n a_{jk}\, \mathfrak n_{\pm,j}
\frac{\partial f_\pm}{\partial x_k}\bigl|_\cC$ and 
$\mathfrak n_\pm(x) = (\mathfrak n_{\pm,1}(x),\dots, \mathfrak n_{\pm,n}(x))$
is the unit vector at the point $x\in\cC$ pointing out of $\Omega_\pm$. The next 
simple lemma is based on a standard decomposition argument. For the convenience of the reader we provide a complete proof.

\begin{lemma}\label{dnlem}
For $\lambda\in\dC\backslash\dR$ and $\varphi\in H^{3/2}(\cC)$ there exist unique 
functions $f_{\pm,\lambda}(\varphi)\in H^2(\Omega_\pm)$ such that 
\begin{equation*}
\cL_\pm f_{\pm,\lambda}(\varphi)=\lambda f_{\pm,\lambda}(\varphi)\quad\text{and}\quad f_{\pm,\lambda}(\varphi)\vert_\cC=\varphi.
\end{equation*}
\end{lemma}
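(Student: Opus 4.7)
The plan is to reduce the problem with inhomogeneous Dirichlet data to the homogeneous Dirichlet problem for the operator $B_\pm$ introduced in \eqref{bpm}, whose resolvent set is available because $B_\pm$ is selfadjoint in a Hilbert space. Fix $\lambda \in \dC \setminus \dR$ and $\varphi \in H^{3/2}(\cC)$, and work separately on $\Omega_+$ and $\Omega_-$; the two cases are verbatim the same.

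Since the trace mapping in \eqref{trace} is surjective, I can pick a lift $g_\pm \in H^2(\Omega_\pm)$ with $g_\pm|_\cC = \varphi$. I now seek $f_{\pm,\lambda}(\varphi)$ in the form $g_\pm + h_\pm$ with $h_\pm \in H^2(\Omega_\pm) \cap H^1_0(\Omega_\pm) = \dom B_\pm$. The eigenequation $\cL_\pm(g_\pm + h_\pm) = \lambda (g_\pm + h_\pm)$ then becomes
\begin{equation*}
(B_\pm - \lambda) h_\pm = -(\cL_\pm - \lambda) g_\pm,
\end{equation*}
and the right-hand side lies in $L^2(\Omega_\pm)$ because $g_\pm \in H^2(\Omega_\pm)$ and the coefficients of $\ell_\pm$ together with $r_\pm^{-1}$ are bounded. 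Now $B_\pm$ is selfadjoint in the Hilbert space $L^2(\Omega_\pm, \pm r_\pm)$ equipped with the scalar product $\langle\cdot,\cdot\rangle_\pm$ in \eqref{asdfg}, hence $\sigma(B_\pm) \subset \dR$ and in particular $\lambda \in \rho(B_\pm)$. Setting $h_\pm := -(B_\pm - \lambda)^{-1}(\cL_\pm - \lambda) g_\pm \in \dom B_\pm$ and $f_{\pm,\lambda}(\varphi) := g_\pm + h_\pm$ therefore produces a function in $H^2(\Omega_\pm)$ that satisfies $\cL_\pm f_{\pm,\lambda}(\varphi) = \lambda f_{\pm,\lambda}(\varphi)$ with $f_{\pm,\lambda}(\varphi)|_\cC = \varphi + 0 = \varphi$, since $h_\pm \in H^1_0(\Omega_\pm)$ has zero trace.

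For uniqueness, if $f^{(1)}$ and $f^{(2)}$ are two such $H^2(\Omega_\pm)$-solutions, their difference $d := f^{(1)} - f^{(2)}$ lies in $H^2(\Omega_\pm) \cap H^1_0(\Omega_\pm) = \dom B_\pm$ and satisfies $(B_\pm - \lambda) d = 0$. Because $\lambda \in \rho(B_\pm)$, this forces $d = 0$. This completes the proof.

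There is no serious obstacle; the only point that has to be handled with care is the choice of the ambient space in which one applies the spectral information about $B_\pm$. It would be wrong to invoke selfadjointness of $B_\pm$ in $L^2(\Omega_\pm)$ (it is not), but selfadjointness in the weighted Hilbert space $L^2(\Omega_\pm, \pm r_\pm)$ is exactly what was recorded around \eqref{bpm}--\eqref{asdfg}, and this gives $\sigma(B_\pm) \subset \dR$, which is all that is needed to invert $B_\pm - \lambda$ on $L^2(\Omega_\pm)$ for nonreal $\lambda$.
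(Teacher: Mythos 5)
Your proof is correct and follows essentially the same route as the paper: both arguments rest on the surjectivity of the trace map \eqref{trace} and on the fact that $B_\pm$ (equivalently $B_+\oplus B_-$) is selfadjoint in the weighted space $L^2(\Omega_\pm,\pm r_\pm)$, so that $B_\pm-\lambda$ is invertible for nonreal $\lambda$, with uniqueness coming from the absence of nonreal eigenvalues and the identification $\dom B_\pm=\{f\in H^2(\Omega_\pm):f|_\cC=0\}$ as in \eqref{domdom}. The only difference is presentational: you lift $\varphi$ componentwise and correct by $-(B_\pm-\lambda)^{-1}(\cL_\pm-\lambda)g_\pm$, whereas the paper packages the same computation as the direct sum decomposition \eqref{decos} of the coupled space $\cS$.
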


\begin{proof}
It is sufficient to show that for $\lambda\in\dC\backslash\dR$ the linear subspace  
\begin{equation}\label{cscs}
\cS:=\bigl\{h_+\oplus h_-\in H^2(\Omega_+)\oplus H^2(\Omega_-):h_+\vert_\cC=h_-\vert_\cC\bigr\}
\end{equation}
admits the direct sum decomposition
\begin{equation}\label{decos}
\cS=\bigl\{g_+\oplus g_-\in \cS:g_\pm\vert_\cC=0\bigr\}\,\dot +\,\bigl\{h_{+,\lambda}\oplus h_{-,\lambda}\in\cS: 
\cL_\pm h_{\pm,\lambda}=\lambda h_{\pm,\lambda} \bigr\}.
\end{equation}
In fact, it follows from \eqref{trace} that the trace map $h\mapsto h\vert_\cC$ defined on $\cS$ in \eqref{cscs} maps onto $H^{3/2}(\cC)$ and since
the first term on the right hand side of \eqref{decos} is its kernel it follows
that the trace map maps the second term on the right hand side of \eqref{decos} bijectively onto $H^{3/2}(\cC)$.

In order to prove the decomposition \eqref{decos} note first that the inclusion $\supset$ in \eqref{decos} holds. Hence it remains
to verify the inclusion $\subset$. For this let $h_+\oplus h_-\in\cS$
and $\lambda\in\dC\backslash\dR$ be fixed. 
Since the boundary $\cC$ of $\Omega_\pm$ is assumed to be compact and smooth it follows 
that the differential operators $B_\pm$ in \eqref{bpm} are defined on
\begin{equation}\label{domdom}
\dom B_\pm=H^2(\Omega_\pm)\cap H^1_0(\Omega_\pm)=\{f\in H^2(\Omega_\pm):f_\pm\vert_\cC=0\}.
\end{equation}
Hence the first set on the right hand side of \eqref{decos} coincides with $\dom(B_+\oplus B_-)$.  Since the spectrum of $B_+\oplus B_-$ is contained
in $\dR$ (see the proof of Theorem~\ref{rnthm}) $B_+\oplus B_- -\lambda$, $\lambda\in\dC\backslash\dR$, is a bijection from its domain onto $L^2(\dR^n)$. 
Thus there exists $g_+\oplus g_-\in\dom(B_+\oplus B_-)$
such that
\begin{equation*}
 (\cL_+-\lambda)h_+\,\oplus (\cL_--\lambda)h_- = (B_+-\lambda)g_+\,\oplus (B_--\lambda)g_-.
\end{equation*}
Therefore $\cL_\pm (h_\pm - g_\pm)= \lambda (h_\pm - g_\pm)$ and hence 
$$h_+\oplus h_- = g_+\oplus g_-\, +\, \bigl((h_+-g_+)\oplus (h_--g_-)\bigr)$$
shows that the inclusion $\subset$ in \eqref{decos} is also valid. The sum in \eqref{decos}
is direct since $\sigma(B_+\oplus B_-)\subset\dR$; indeed, each element in the intersection of
the sets on the right hand side of \eqref{decos} would be an eigenfunction of $B_+\oplus B_-$ 
corresponding to $\lambda\in\dC\backslash\dR$.
\end{proof}

For $\lambda\in\dC\backslash\dR$, $\varphi\in H^{3/2}(\cC)$ and 
$f_{\pm,\lambda}(\varphi)\in H^2(\Omega_\pm)$ as in Lemma~\ref{dnlem}
we define 
\begin{equation}\label{dnmap}
M(\lambda):H^{3/2}(\cC)\rightarrow H^{1/2}(\cC),\qquad \varphi\mapsto \frac{\partial f_{+,\lambda}(\varphi)}{\partial\nu_+}\Bigl|_\cC
+\frac{\partial f_{-,\lambda}(\varphi)}{\partial\nu_-}\Bigl|_\cC.
\end{equation}
Roughly speaking $M$ is the sum of the Dirichlet-to-Neumann maps associated to $\cL_\pm$ which map
the Dirichlet boundary values of solutions of $\cL_\pm f_\pm=\lambda f_\pm$ onto their Neumann boundary values.
A similar function in a ``definite'' setting appears also in \cite{R09}.
In the next theorem we show how the nonreal eigenvalues of $T$ can be described with the help of the function $M$.

\begin{theorem}\label{sigmat}
Let the operator function $\lambda\mapsto M(\lambda)$ be defined as in \eqref{dnmap} and assume that $\rho(T)\not=\emptyset$. Then
\begin{equation*}
\sigma(T)\cap(\dC\backslash\dR)=\bigl\{ \lambda\in\dC\backslash\dR:\ker M(\lambda)\not =\{0\}\bigr\}
\end{equation*}
and $\ker(T-\lambda)=\{f\in H^2(\dR^n): M(\lambda) f\vert_\cC=0\}$ for all $\lambda\in\dC\backslash\dR$.
\end{theorem}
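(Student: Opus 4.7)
The plan is to establish a bijective correspondence between $\ker(T-\lambda)$ and $\ker M(\lambda)$ for each $\lambda\in\dC\backslash\dR$, implemented by the Dirichlet trace $f\mapsto f\vert_\cC$. The spectral equality then follows at no extra cost: the standing hypothesis $\rho(T)\not=\emptyset$ together with Theorem~\ref{rnthm} ensures that the nonreal spectrum of $T$ consists of normal eigenvalues, so $\sigma(T)\cap(\dC\backslash\dR)=\sigma_p(T)\cap(\dC\backslash\dR)$, and nontriviality of $\ker(T-\lambda)$ becomes equivalent to nontriviality of $\ker M(\lambda)$.

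For the forward direction I would start with $f\in\ker(T-\lambda)$ and invoke the elliptic regularity identification $\dom T=H^2(\dR^n)$ recorded at the beginning of this section to place $f$ in $H^2(\dR^n)$. Setting $\varphi:=f\vert_\cC\in H^{3/2}(\cC)$ via the trace map \eqref{trace} and $f_\pm:=f\vert_{\Omega_\pm}\in H^2(\Omega_\pm)$, the restrictions satisfy $\cL_\pm f_\pm=\lambda f_\pm$ with common Dirichlet trace $\varphi$, so the uniqueness part of Lemma~\ref{dnlem} forces $f_\pm=f_{\pm,\lambda}(\varphi)$. The crucial observation is that $f\in H^2(\dR^n)$ has globally defined first-order derivatives whose boundary traces from each side of $\cC$ coincide; since the outward unit normals satisfy $\mathfrak n_+=-\mathfrak n_-$ on $\cC$, the definition of the conormal derivative immediately yields $\tfrac{\partial f_+}{\partial\nu_+}\vert_\cC+\tfrac{\partial f_-}{\partial\nu_-}\vert_\cC=0$, which is exactly $M(\lambda)\varphi=0$.

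For the converse, given $\varphi\in\ker M(\lambda)$, I would glue the two solutions from Lemma~\ref{dnlem} by setting $f\vert_{\Omega_\pm}:=f_{\pm,\lambda}(\varphi)$. The Dirichlet traces from the two sides agree (both equal $\varphi$), and the conormal traces are opposite on $\cC$ thanks to $M(\lambda)\varphi=0$ combined with $\mathfrak n_+=-\mathfrak n_-$. A standard transmission argument then places $f$ in $H^2(\dR^n)=\dom T$, and the piecewise identities $\cL_\pm f_\pm=\lambda f_\pm$ assemble into $Tf=\lambda f$. The correspondence $\varphi\leftrightarrow f$ is bijective: if $\varphi\not=0$ then $f\not=0$ since $f\vert_\cC=\varphi$; conversely, if $f\vert_\cC=0$ then $f_\pm\in\dom B_\pm$ by \eqref{domdom} with $B_\pm f_\pm=\lambda f_\pm$, which forces $f_\pm\equiv 0$ because $\sigma(B_+\oplus B_-)\subset\dR$ (established in the proof of Theorem~\ref{rnthm}).

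I expect the principal obstacle to be the transmission step in the converse direction: showing that two pieces $f_+\in H^2(\Omega_+)$ and $f_-\in H^2(\Omega_-)$ with matching Dirichlet traces and with opposite conormal traces on $\cC$ actually glue into a function of class $H^2(\dR^n)$, and not merely $H^1(\dR^n)$. This is the classical transmission condition for second-order elliptic operators in divergence form, and I would verify it by testing $f_+\oplus f_-$ against $C_0^\infty(\dR^n)$ test functions and applying Green's formula on $\Omega_+$ and $\Omega_-$ separately, the boundary contributions on $\cC$ cancelling precisely because of the conormal matching. Once this transmission property is in place, the remaining arguments are direct consequences of Lemma~\ref{dnlem} and the fact that $\lambda\in\dC\backslash\dR$ lies in the resolvent set of the decoupled operator $B_+\oplus B_-$.
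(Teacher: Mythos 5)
Your argument is correct and follows the same overall skeleton as the paper (reduce to a trace correspondence between $\ker(T-\lambda)$ and $\ker M(\lambda)$ via Lemma~\ref{dnlem}, with injectivity of the trace on eigenfunctions coming from $\sigma(B_+\oplus B_-)\subset\dR$), but the decisive step in the converse direction is handled by a genuinely different mechanism. The paper never proves directly that the glued function $f=f_{+,\lambda}(\varphi)\oplus f_{-,\lambda}(\varphi)$ lies in $H^2(\dR^n)$: it tests $f$ against arbitrary $g\in\dom T$, uses Green's identity on $\Omega_\pm$ (the boundary terms cancel by the matching Dirichlet traces and opposite conormal derivatives of both $f$ and $g$) to get $[\cL f,g]=[f,Tg]$, and then invokes the selfadjointness of $T$ in the Krein space $(L^2(\dR^n),[\cdot,\cdot])$ to conclude $f\in\dom T^+=\dom T$ and $Tf=\cL f=\lambda f$. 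You instead test against $C_0^\infty(\dR^n)$ and argue via the transmission condition that $f$ itself has the required global regularity. That route works, but note two points where it needs a little more care than you indicate: (a) the $C_0^\infty$-testing argument by itself only yields $f\in H^1(\dR^n)$ with $\ell f=\lambda r f\in L^2(\dR^n)$ in the distributional sense, which already places $f$ in $\dom T$ by the definition \eqref{t}; the upgrade to $H^2(\dR^n)$ then comes from the global elliptic regularity statement $\dom T=H^2(\dR^n)$ quoted at the start of Section~\ref{omegar}, or, if you want genuine ``gluing'', from the uniform ellipticity $\sum a_{jk}\mathfrak n_j\mathfrak n_k\geq C>0$, which is what lets you recover matching of the full normal derivative (hence of $\nabla f$) from matching Dirichlet traces plus opposite conormal traces; (b) for the spectral equality you should cite Theorem~\ref{known2} alongside Theorem~\ref{rnthm}, since Theorem~\ref{rnthm} is stated under $\min\sigma_\ess(A)\leq 0$ and the complementary case $\min\sigma_\ess(A)>0$ is covered by Theorem~\ref{known2}; this is exactly how the paper argues that nonreal spectral points are (normal) eigenvalues. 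What the paper's operator-theoretic route buys is that no further regularity input is needed beyond what is already encoded in the selfadjointness of $T$; what your route buys is a more classical, self-contained PDE argument that exhibits the eigenfunction concretely as an $H^2$ function satisfying the transmission conditions.
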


\begin{proof}
Assume first that $\lambda\in\dC\backslash\dR$ belongs to the spectrum of $T$. Then, by Theorems~\ref{known2} and~\ref{rnthm}
the point $\lambda$ is a normal eigenvalue of $T$ and hence $\cL f=\lambda f$ holds for some nontrivial 
$f\in\dom T=H^2(\dR^n)$. In particular,
the restrictions $f_\pm$ of $f$ onto $\Omega_\pm$ belong to $H^2(\Omega_\pm)$ and we have
\begin{equation}\label{bcs}
\cL_\pm f_\pm=\lambda f_\pm,\quad
f_+\vert_\cC=f_-\vert_\cC,\quad\text{and}\quad \frac{\partial f_+}{\partial\nu_+}\Bigl|_\cC=-\frac{\partial f_-}{\partial\nu_-}\Bigl|_\cC.
\end{equation}
By \eqref{trace} we have $\varphi:=f_\pm\vert_\cC\in H^{3/2}(\cC)$ and hence 
$f_\pm=f_{\pm,\lambda}(\varphi)$ in the notation of
Lemma~\ref{dnlem}. The third property in \eqref{bcs} implies
\begin{equation*}
M(\lambda)\varphi=\frac{\partial f_{+,\lambda}(\varphi)}{\partial\nu_+}
+\frac{\partial f_{-,\lambda}(\varphi)}{\partial\nu_-}=\frac{\partial f_+}{\partial\nu_+}
+\frac{\partial f_-}{\partial\nu_-}=0
\end{equation*}
and hence $\varphi\in\ker M(\lambda)$.
Furthermore, $\varphi$ is nonzero, as otherwise $f_\pm\in H^2(\Omega_\pm)$ would be nontrivial 
solutions of the Dirichlet problems $\cL_\pm f_\pm=\lambda f_\pm$, $f_\pm\vert_\cC=0$, which do not exist due to 
$\lambda\not\in\dR$. In other words, since the selfadjoint operators $B_\pm$ in \eqref{bpm} 
do not have nonreal eigenvalues we conclude $\varphi\not=0$.

For the converse let $\lambda\in\dC\backslash\dR$ and $\varphi\in\ker M(\lambda)$ with $\varphi\not=0$. By Lemma~\ref{dnlem} 
there exist unique functions $f_{\pm,\lambda}(\varphi)\in H^2(\Omega_\pm)$ such that 
$\cL_\pm f_{\pm,\lambda}(\varphi)=\lambda f_{\pm,\lambda}(\varphi)$ and $f_{\pm,\lambda}(\varphi)\vert_\cC=\varphi$ hold. Since
$M(\lambda)\varphi=0$ we have
\begin{equation}\label{tzu}
\frac{\partial f_{+,\lambda}(\varphi)}{\partial\nu_+}=-\frac{\partial f_{-,\lambda}(\varphi)}{\partial\nu_-}.
\end{equation}
Define the function $f=f_+\oplus f_-\in L^2(\dR^n)$ by $f_\pm:=f_{\pm,\lambda}(\varphi)$ and let
$g\in\dom T$. Then $f\not=0$ and 
\begin{equation}\label{tfg}
\begin{split}
&\qquad\qquad[\cL f,g]-[f,Tg]=(\ell f,g)-(f,\ell g)\\
&=(\ell_+ f_+,g_+)_+ - (f_+,\ell_+ g_+)_+ + (\ell_- f_-,g_-)_- - (f_-,\ell_- g_-)_-,\\
\end{split}
\end{equation}
where $[\cdot,\cdot]$ is the indefinite inner product in \eqref{indefprod}, $(\cdot,\cdot)$ is the usual
scalar product in $L^2(\dR^n)$ and $(\cdot,\cdot)_\pm$ denote the scalar products in $L^2(\Omega_\pm)$.
Since the function $g\in\dom T$ satisfies 
\begin{equation*}
 g_+\vert_\cC=g_-\vert_\cC\qquad\text{and}\qquad\frac{\partial g_+}{\partial\nu_+}\Bigl|_\cC=-\frac{\partial g_-}{\partial\nu_-}\Bigl|_\cC
\end{equation*}
it follows from Green's identity, $f_+\vert_\cC=f_-\vert_\cC$, and 
\eqref{tzu} that \eqref{tfg} is equal to
\begin{equation*}
\biggl(f_+\vert_\cC,\frac{\partial g_+}{\partial\nu_+}\Bigl|_\cC \biggr)
-\biggl(\frac{\partial f_+}{\partial\nu_+}\Bigl|_\cC,g_+\vert_\cC\biggr)
+\biggl(f_-\vert_\cC,\frac{\partial g_-}{\partial\nu_-}\Bigl|_\cC \biggr)
-\biggl(\frac{\partial f_-}{\partial\nu_-}\Bigl|_\cC,g_-\vert_\cC\biggr)=0.
\end{equation*}
This is true for any $g\in\dom T$ and since $T$ is selfadjoint with respect to $[\cdot,\cdot]$ we conclude from $[\cL f,g]=[f,Tg]$ that $f\in\dom T$ and $Tf=\cL f$.
Moreover, from $\cL_\pm f_\pm=\lambda f_\pm$ we obtain $f\in\ker(T-\lambda)$, i.e., $\lambda$ is an eigenvalue of $T$
with corresponding eigenfunction $f$. 
\end{proof}

The next theorem provides a variant of Krein's formula which
shows how the resolvent of the indefinite elliptic operator
$T$ differs from the resolvent of the orthogonal sum of the weighted differential operators
\begin{equation*}
B_\pm f_\pm=\cL_\pm(f_\pm)=\frac{1}{r_\pm}\ell_\pm(f_\pm),\qquad \dom B_\pm=H^2(\Omega_\pm)\cap H^1_0(\Omega_\pm);
\end{equation*}
cf. \eqref{bpm} and \eqref{domdom}. The operators $T$ and $B_+\oplus B_-$ are viewed as operators in $L^2(\dR^n)$. 
We note first that the statements in 
Lemma~\ref{dnlem} and Theorem~\ref{sigmat} remain true if the set $\dC\backslash\dR$ is replaced by the resolvent set of the operator $B_+\oplus B_-$. 
This set contains $\dC\backslash\dR$ and may also contain subsets of the real line.
For $\lambda\in\rho(B_+\oplus B_-)$ define the mapping $\gamma(\lambda):L^2(\cC)\rightarrow L^2(\dR^n)$ by
\begin{equation*}
\gamma(\lambda)\varphi:= 
f_{+,\lambda}(\varphi)\oplus f_{-,\lambda}(\varphi),\qquad \dom\gamma(\lambda)=H^{3/2}(\cC),
\end{equation*}
where $f_{\pm,\lambda}(\varphi)$ are the unique solutions of $\cL_\pm u_\pm=\lambda u_\pm$, 
$u_\pm\vert_\cC=\varphi$; cf. Lemma~\ref{dnlem}. 

Theorem~\ref{resformel} is an indefinite variant of \cite[Theorem 4.4~(ii)]{AB09} and can
be proved in almost the same way. Therefore we only indicate some ideas of the proof and refer the
reader to \cite[$\S 4$]{AB09} for the details, see also \cite{BL07}. Recall that the multiplication operator 
$R$ is an isomorphism in $L^2(\dR^n)$.

\begin{theorem}\label{resformel}
For all $\lambda\in\rho(T)\cap\rho(B_+\oplus B_-)$ the difference of the resolvents of 
$T$ and $B_+\oplus B_-$ is a compact operator in $L^2(\dR^n)$ given by
\begin{equation}\label{resfor}
(T-\lambda)^{-1}-\bigl((B_+\oplus B_-)-\lambda\bigr)^{-1}=\gamma(\lambda)M(\lambda)^{-1}\gamma(\bar\lambda)^*R.
\end{equation}
\end{theorem}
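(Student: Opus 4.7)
The plan is to follow the standard boundary-triple style derivation of Krein's formula, adapted to the present indefinite situation. Fix $\lambda\in\rho(T)\cap\rho(B_+\oplus B_-)$ and $g\in L^2(\dR^n)$, and set
\begin{equation*}
f:=(T-\lambda)^{-1}g,\qquad f_0:=\bigl((B_+\oplus B_-)-\lambda\bigr)^{-1}g,\qquad h:=f-f_0.
\end{equation*}
First I would verify that on each $\Omega_\pm$ both $f_\pm$ and $f_{0,\pm}$ satisfy $\ell_\pm u_\pm-\lambda r_\pm u_\pm=r_\pm g_\pm$, so that $(\cL_\pm-\lambda)h_\pm=0$ on $\Omega_\pm$. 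Since $f\in\dom T=H^2(\dR^n)$ the traces $f_+\vert_\cC=f_-\vert_\cC=:\varphi\in H^{3/2}(\cC)$ coincide, while $f_{0,\pm}\vert_\cC=0$ by \eqref{domdom}, so $h_\pm\vert_\cC=\varphi$. Lemma~\ref{dnlem} (extended to $\rho(B_+\oplus B_-)$ as indicated before the theorem) then yields $h_\pm=f_{\pm,\lambda}(\varphi)$, i.e.\ $h=\gamma(\lambda)\varphi$.

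Next I would identify $\varphi$ using the remaining transmission condition. Membership $f\in H^2(\dR^n)$ forces
\begin{equation*}
\frac{\partial f_+}{\partial\nu_+}\Bigl|_\cC+\frac{\partial f_-}{\partial\nu_-}\Bigl|_\cC=0,
\end{equation*}
because $\nu_+=-\nu_-$ on $\cC$. Writing $f=f_0+h$ and using the definition \eqref{dnmap} of $M(\lambda)$, this becomes
\begin{equation}\label{eqmlambda}
M(\lambda)\varphi=-\left(\frac{\partial f_{0,+}}{\partial\nu_+}\Bigl|_\cC+\frac{\partial f_{0,-}}{\partial\nu_-}\Bigl|_\cC\right).
\end{equation}

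The key computation, and the main obstacle, is to show that the right-hand side of \eqref{eqmlambda} equals $\gamma(\bar\lambda)^* R g$, where the adjoint is taken with respect to the usual scalar product on $L^2(\dR^n)$ and the sesquilinear duality between $H^{-1/2}(\cC)$ and $H^{1/2}(\cC)$ pairing $L^2(\cC)$ with itself. For any $\psi\in H^{3/2}(\cC)$ set $u_\pm:=r_\pm g_\pm$, so that $\ell_\pm f_{0,\pm}-\lambda r_\pm f_{0,\pm}=u_\pm$ on $\Omega_\pm$. Since $\cL_\pm f_{\pm,\bar\lambda}(\psi)=\bar\lambda f_{\pm,\bar\lambda}(\psi)$ and $r_\pm$ is real, applying Green's second identity for $\ell_\pm$ on $\Omega_\pm$ and cancelling the terms containing $\lambda r_\pm f_{0,\pm}\overline{f_{\pm,\bar\lambda}(\psi)}$ yields, using $f_{0,\pm}\vert_\cC=0$ and $f_{\pm,\bar\lambda}(\psi)\vert_\cC=\psi$,
\begin{equation*}
(u_\pm,f_{\pm,\bar\lambda}(\psi))_{L^2(\Omega_\pm)}=-\left(\frac{\partial f_{0,\pm}}{\partial\nu_\pm}\Bigl|_\cC,\psi\right)_{L^2(\cC)}.
\end{equation*}
Summing over $\pm$ and recalling $Rg=u_+\oplus u_-$ gives $(Rg,\gamma(\bar\lambda)\psi)_{L^2(\dR^n)}$ on the left, so that by duality
\begin{equation*}
\gamma(\bar\lambda)^* Rg=-\frac{\partial f_{0,+}}{\partial\nu_+}\Bigl|_\cC-\frac{\partial f_{0,-}}{\partial\nu_-}\Bigl|_\cC,
\end{equation*}
which by \eqref{eqmlambda} equals $M(\lambda)\varphi$. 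Thus $\varphi=M(\lambda)^{-1}\gamma(\bar\lambda)^* Rg$ and $h=\gamma(\lambda)M(\lambda)^{-1}\gamma(\bar\lambda)^* Rg$, proving the resolvent formula~\eqref{resfor}. The hardest part is the careful bookkeeping with the indefinite weight $r$ in Green's identity: it is precisely the factor $r_\pm$ absorbed into the right-hand side $u_\pm=r_\pm g_\pm$ that produces the extra $R$ on the right of \eqref{resfor}, distinguishing the indefinite case from the self-adjoint one.

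Finally, for compactness, the composition factors as
\begin{equation*}
L^2(\dR^n)\xrightarrow{\gamma(\bar\lambda)^*R}H^{1/2}(\cC)\xrightarrow{M(\lambda)^{-1}}H^{3/2}(\cC)\xhookrightarrow{\text{compact}}L^2(\cC)\xrightarrow{\gamma(\lambda)}L^2(\dR^n),
\end{equation*}
where the compactness of the embedding $H^{3/2}(\cC)\hookrightarrow L^2(\cC)$ is the Rellich theorem on the compact smooth hypersurface $\cC$, and boundedness of the outer maps follows from elliptic regularity for the Dirichlet problems on $\Omega_\pm$. Together with the referenced arguments in \cite[$\S 4$]{AB09} and \cite{BL07} this completes the proof.
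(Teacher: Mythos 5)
Your derivation is correct and is essentially the argument the paper delegates to \cite[$\S 4$]{AB09}: you establish exactly the key adjoint identity $\gamma(\bar\lambda)^*R\bigl((B_+\oplus B_-)-\lambda\bigr)f_0=-\tfrac{\partial f_{0,+}}{\partial\nu_+}\bigl|_\cC-\tfrac{\partial f_{0,-}}{\partial\nu_-}\bigl|_\cC$ that the paper quotes, and then obtain \eqref{resfor} by the same decomposition $f=f_0+\gamma(\lambda)\varphi$ with $M(\lambda)\varphi=\gamma(\bar\lambda)^*Rg$; the only difference is that you write out in full what the paper cites. The one place where your write-up differs in substance is the compactness step: you factor through $M(\lambda)^{-1}:H^{1/2}(\cC)\to H^{3/2}(\cC)$ and the compact embedding $H^{3/2}(\cC)\hookrightarrow L^2(\cC)$, which tacitly requires that $M(\lambda)$ maps $H^{3/2}(\cC)$ \emph{onto} $H^{1/2}(\cC)$ with bounded inverse; injectivity comes from Theorem~\ref{sigmat}, but surjectivity is asserted only implicitly (it does follow from your own identity together with the surjectivity of the conormal trace on $\dom(B_+\oplus B_-)$ and the open mapping theorem, so the gap is easily closed). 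The paper instead sidesteps this by invoking compactness of the closure of $M(\lambda)^{-1}$ in $L^2(\cC)$ via \cite{LM72} and \cite[Corollaries 3.6 and 4.6]{AB09}; your route is more self-contained but should include that surjectivity remark, while the paper's is shorter at the cost of an external reference.
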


\begin{proof}
A slight modification of \cite[Proposition 4.3]{AB09} yields that
$\gamma(\lambda)$ is a densely defined bounded operator from $L^2(\cC)$ into $L^2(\dR^n)$ and
that the adjoint operator $\gamma(\bar\lambda)^*:L^2(\dR^n)\rightarrow
L^2(\cC)$ has the property
\begin{equation*}
\gamma(\bar\lambda)^*R\bigl((B_+\oplus B_-)-\lambda\bigr)f=
-\frac{\partial f_+}{\partial\nu_+}\Bigl|_\cC
-\frac{\partial f_-}{\partial\nu_-}\Bigl|_\cC,
\end{equation*}
where $f=f_+\oplus f_-\in\dom (B_+\oplus B_-)$.
In particular, $\ran\gamma(\bar\lambda)^*\subset H^{1/2}(\cC)$ and together with Lemma~\ref{dnlem},
Theorem~\ref{sigmat} and \eqref{dnmap} we conclude that the right hand side in \eqref{resfor} is well defined
for all $\lambda\in\rho(T)\cap\rho(B_+\oplus B_-)$. The same reasoning as in the proof of 
\cite[Theorem 4.4~(ii)]{AB09} shows the relation \eqref{resfor} for the difference of the resolvents
of $T$ and $B_+\oplus B_-$ in $L^2(\dR^n)$. Moreover, it follows from \cite{LM72} in the same way as in 
\cite[Corollaries 3.6 and 4.6]{AB09}
that 
for $\lambda\in\rho(T)\cap\rho(B_+\oplus B_-)$
the closure of $M(\lambda)^{-1}$ in $L^2(\cC)$ is a compact operator in $L^2(\cC)$.
Therefore,
the right hand side of \eqref{resfor} is a compact operator in $L^2(\dR^n)$.
\end{proof}

\begin{remark}
We note that the resolvent difference in \eqref{resfor} is not only compact but
belongs to certain Schatten-von Neumann ideals that depend on the dimension $n$; 
cf. \cite{BLL10,B62,G84,M10}.
\end{remark}



\begin{thebibliography}{99}

\bibitem{A97} M.\ Agranovich, Elliptic boundary problems, Encyclopaedia Math. Sci. 79,  
Partial differential equations IX, Springer Berlin (1997) 1--144, 275--281.

\bibitem{ADF97} M.\ Agranovich, R.\ Denk, and M.\ Faierman, Weakly smooth nonselfadjoint spectral elliptic boundary problems, Math. Top. 14 (1997), Akademie Verlag, Berlin, 138--199

\bibitem {AG81} N.I.\ Akhiezer and I.M.\ Glazman, 
{\it Theory of Linear Operators in Hilbert Space}, Dover Publications, New York, 1993.

\bibitem{AB09} D.\ Alpay and J.\ Behrndt, Generalized $Q$-functions and Dirichlet-to-Neumann maps for elliptic 
differential operators, J. Funct. Anal. 257 (2009), 1666--1694.

\bibitem{ABT08} T.Ya.\ Azizov, J.\ Behrndt, and C.\ Trunk, On finite rank perturbations of 
definitizable operators, J. Math. Anal. Appl.  339  (2008) 1161--1168.

\bibitem {AI89} T.Ya.\ Azizov and I.S.\ Iokhvidov, {\it Linear Operators in Spaces with an Indefinite Metric}, 
John Wiley Sons, Ltd., Chichester, 1989.

\bibitem{B65} R.\ Beals, Non-local boundary value problems for elliptic operators, Amer. J. Math. 87 (1965), 315--362.

\bibitem{BL07} J.\ Behrndt and M.\ Langer, Boundary value problems for elliptic partial differential operators on
bounded domains, J.\ Funct.\ Anal. 243 (2007), 536--565.

\bibitem{BLL10} J.\ Behrndt, V.\ Lotoreichik, and M.\ Langer, Spectral estimates for resolvent differences of selfadjoint elliptic operators, 
submitted. 

\bibitem{BP10} J.\ Behrndt and F.\ Philipp, Spectral analysis of singular ordinary differential
operators with indefinite weights, J.\ Differential Equations 248 (2010), 2015--2037.

\bibitem {BT07} J.\ Behrndt and C.\ Trunk, On the negative squares of indefinite Sturm-Liouville operators, 
J.\ Differential Equations 238 (2007), 491--519.

\bibitem{B62} M.S~Birman, Perturbations of the continuous spectrum of a singular elliptic differential 
operator by varying the boundary and the boundary conditions, Vestnik Leningrad. Univ. 17 (1962), 22--55.

\bibitem{B74} J.\ Bognar, {\it Indefinite Inner Product Spaces}, Ergebnisse der Mathematik und ihrer Grenzgebiete, Band 78. Springer-Verlag, New York-Heidelberg, 1974.

\bibitem{C85} B.\ \'Curgus, On the regularity of the critical point infinity of definitizable operators, Integral Equations Operator Theory 8 (1985), 462--488.

\bibitem{CL89} B.\ \'Curgus and H.\ Langer, A Krein space approach to
symmetric ordinary differential operators with an indefinite weight function, 
J.\ Differential Equations 79 (1989) 31--61.


\bibitem{CN93} B. Curgus and B. Najman, 
A Krein space approach to elliptic eigenvalue problems with indefinite weights, 
Differential Integral Equations 7 (1994), 1241--1252.

\bibitem {CN95-2} B.\ \'Curgus and B. Najman, Quasi-uniformly positive operators in Krein space,
Operator Theory Advances Applications 80 (1995) 90--99.

\bibitem{DFM02} R.\ Denk, M.\ Faierman, and M. M\"{o}ller, 
An elliptic boundary problem for a system involving a discontinuous weight, 
Manuscripta Math.  108  (2002), 289--317.


\bibitem{EE} D.E.~Edmunds and W.D.~Evans, {\it Spectral Theory and Differential Operators}, Oxford Mathematical Monographs, Oxford University Press, New York, 1987.

\bibitem{F88} M. Faierman, On the eigenvalues of nonselfadjoint problems involving indefinite weights,  Math. Ann.  282  (1988),  369--377.

\bibitem{F90-1} M. Faierman, Elliptic problems involving an indefinite weight,  Trans. Amer. Math. Soc.  320  (1990), 253--279.

\bibitem{F90} M. Faierman, Nonselfadjoint elliptic problems involving an indefinite weight, 
Comm. Partial Differential Equations  15  (1990),  939--982.

\bibitem{F95} M. Faierman, On the spectral theory of an elliptic boundary value problem involving an indefinite weight, 
Oper. Theory Adv. Appl., 80, Birkh\"{a}user, Basel (1995), 137--154.

\bibitem{F00} M. Faierman, An elliptic boundary problem involving an indefinite weight,  Proc. Roy. Soc. Edinburgh Sect. A  130 
(2000),  287--305.

\bibitem{F02} M. Faierman, Eigenvalue asymptotics for an elliptic boundary problem involving an indefinite weight, 
Integral Equations Operator Theory  43  (2002),  131--154.

\bibitem{F09} M. Faierman, The Calder\'on approach to an elliptic boundary problem,  Math. Nachr.  282  (2009), 1134--1158.


\bibitem{FL96} M. Faierman and H. Langer, Elliptic problems involving an indefinite weight function,  Oper. Theory Adv. Appl. 87, Birkh\"{a}user, Basel (1996), 105--124,

\bibitem{FM07} M. Faierman and M.~M\"{o}ller, Eigenvalue asymptotics for an elliptic boundary problem, 
Proc. Roy. Soc. Edinburgh Sect. A  137  (2007),  281--302.

\bibitem{G85} P.~Grisvard, {\it Elliptic Problems in Nonsmooth Domains}, 
Monographs and Studies in Mathematics 24, Pitman, Boston, MA, 1985.

\bibitem{G84} G.~Grubb,
Remarks on trace estimates for exterior boundary problems,
Comm.\ Partial Differential Equations 9 (1984), 231--270.

\bibitem{Kato} T. Kato, {\it Perturbation Theory for Linear Operators}, Grundlehren der Mathematischen Wissenschaften 132, Springer-Verlag, Berlin-New York, 1976.

\bibitem{KMWZ03} Q.~Kong, M.~M\"{o}ller, H.~Wu and A.~Zettl, Indefinite Sturm-Liouville problems, Proc. Roy. Soc. Edinburgh Sect. A  133  (2003) 639--652.



\bibitem{K70} M.G. Krein, Introduction to the geometry of indefinite $J$-spaces and to the theory of operators in those spaces,
AMS Translation 93 (2) (1970) 103--176.


\bibitem{L65} H. Langer, Spektraltheorie linearer Operatoren in $J$-R\"{a}umen und einige Anwendungen auf die Schar $L(\lambda)=\lambda^2I+\lambda B+C$, Habilitationsschrift, 
Technische Universit\"{a}t Dresden (1965).

\bibitem {L82} H.\ Langer, Spectral functions of definitizable operators in Krein spaces, in:
Functional Analysis: Proceedings of a Conference Held at Dubrovnik, Yugoslavia, November 2-14, 1981,
Lecture Notes in Mathematics 948, Springer (1982) 1--46.


\bibitem{LM72} J.\ Lions and E.\ Magenes, {\it Non-Homogeneous Boundary Value Problems and Applications I}, 
Springer Verlag, New York - Heidelberg, 1972.

\bibitem{M10} M.\ Malamud, Spectral theory of elliptic operators
in exterior domains, Russ. J. Math. Phys. 17 (2010), 96--125.

\bibitem {N68} M.A.\ Naimark, {\it Linear Differential Operators, Part II}, Frederick Ungar Publishing Co., 
New York, 1968.

\bibitem{P89} S.G.\ Pyatkov, Some properties of eigenfunctions of linear pencils, 
Sibirian Math. Journal 30 (1989), 587--597.

\bibitem{P95} S.G.\ Pyatkov, Riesz's bases from the eigenvectors and associated vectors of elliptic eigenvalue problems with an indefinite weight function, Siberian J. Differential Equations 1 (1995), 179--196.

\bibitem{P00} S.G.\ Pyatkov, Elliptic eigenvalue problems involving an indefinite weight function,  Siberian Adv. Math.  10  (2000),  134--150.

\bibitem{P09} S.G.\ Pyatkov, Interpolation of Sobolev spaces and indefinite elliptic problems, Oper. Theory Adv. Appl. 198, Birkh\"{a}user, Basel (2009), 265--290.

\bibitem{R09} V.\ Ryzhov, Weyl--Titchmarsh function of an abstract boundary value problem, operator colligations,
and linear systems with boundary control, Complex Anal.\ Oper.\ Theory 3 (2009), 289--322.

\bibitem{W87} J. Wloka, {\it Partial Differential Equations}, Cambridge University Press, 1987.

\end{thebibliography}
\end{document}